\newtheorem{definition}{Definition}[section]
\newtheorem{theorem}{Theorem}[section]
\newtheorem{corollary}{Corollary}[section]
\newtheorem{lemma}{Lemma}[section]
\newcommand\Sec{Sec.\@}
\newcommand\Fig{Fig.\@}
\newif\ifcompress
\algnewcommand{\Initialize}[1]{%
  \State \textbf{Initialize:}
  \Statex \hspace*{\algorithmicindent}\parbox[t]{.8\linewidth}{\raggedright #1}
}
\newcommand{\paperbibstyle}{IEEEtran}
\newcommand{\beq}{\begin{equation}}
\newcommand{\eeq}{\end{equation}}
\newcommand{\E}{\mathbf{E}}
\renewcommand{\Pr}{\mathbb{P}}
\newcommand{\argmax}{\operatorname{argmax}}
\newcommand{\argmin}{\operatorname{argmin}}
  \def\1{{\mathbf 1}}
\newcommand{\reals}{{\rm I\hspace{-.07cm}R}}
\newcommand{\p}{\prime}
\newcommand{\norm}[1]{\lVert#1\rVert}
\newcommand{\tindx}{t}
\newcommand{\sindx}{s}
\newcommand{\Tindxter}{T}
\newcommand{\probe}{p}
\newcommand{\response}{x}
\newcommand{\utility}{u}
\newcommand{\dataset}{\mathcal{D}}
\newcommand{\noise}{w}
\newcommand{\obsdataset}{\mathcal{D}_\text{obs}}
\newcommand{\obsresponse}{y}
\def\1{{\mathbf 1}}
\begin{document}

\title{Utility Change Point Detection in Online Social Media: A Revealed Preference Framework\thanks{The authors are with the Dept. of ECE, The University of British Columbia, Vancouver, B.C.  Canada. Emails: \{aaprem, vikramk\}@ece.ubc.ca. \newline A significantly shortened version of some of the ideas in this paper have been accepted to ICASSP 2017.}}
\author{\IEEEauthorblockN{Anup Aprem and Vikram Krishnamurthy, \emph{Fellow, IEEE}}} %
\maketitle
\begin{abstract}
	This paper deals with change detection of utility maximization behaviour in online social media. 
	Such changes occur due to the effect of marketing, advertising, or changes in ground truth. 
	First, we use the revealed preference framework to detect the unknown time point (change point) at which the utility function changed.
	We derive necessary and sufficient conditions for detecting the change point. 
	Second, in the presence of noisy measurements, we propose a method to detect the change point and construct a decision test. 
	Also, an optimization criteria is provided to recover the linear perturbation coefficients. 
	Finally, to reduce the computational cost, a dimensionality reduction algorithm using Johnson-Lindenstrauss transform is presented. 
	The results developed are illustrated on two real datasets: Yahoo! Tech Buzz dataset and Youstatanalyzer dataset. 
	By using the results developed in the paper, several useful insights can be gleaned from these data sets. 
	First, the changes in ground truth affecting the utility of the agent can be detected by utility maximization behaviour in online search. 
	Second, the recovered utility functions satisfy the single crossing property indicating strategic substitute behaviour in online search. 
	Third, due to the large number of videos in YouTube, the utility maximization behaviour was verified through the dimensionality reduction algorithm. 
	Finally, using the utility function recovered in the lower dimension, we devise an algorithm to predict total traffic in YouTube. \\ 
	\begin{keywords}
		\indent Social media, YouTube, utility maximization, revealed preference, dimensionality reduction, change point detection
	\end{keywords}
\end{abstract}


\section{Introduction}
\label{sec:intro}
The interaction of humans on social media platforms mimic their interactions in the real world~\cite{RN96}. 
Hence, as in the real world, ``utility maximization'' underpins human interaction on social media platforms. 
Utility maximization is the fundamental problem humans face, wherein humans maximize utility given their limited resources of money or attention. 
Detection of utility maximization behaviour is therefore useful in online social media. 
However, a key difference for agents in online social media is the absence of economic incentives. 
For example, the majority of content in Facebook, YouTube and Twitter are user-generated with limited or no economic incentives. 
Hence, as explained in~\cite{TS13}, incentives such as ``fun'' and ``fame'' are some of the major attributes of the utility function of online social behaviour. 
It is therefore difficult to analytically characterize the utility function and hence any detection of utility maximization behaviour in online social media needs to be necessarily nonparametric in nature. 
Another key difference is that the utility function in online social media is ``content-aware'', i.e.\ the quality of content affects the utility function. 
Due to the content-aware nature of the utility function and the availability of large amount of user generated content, data from online social media is high-dimensional. 
For example, utility maximization in YouTube depends on all the user generated video content available at any point of time. 

The problem of nonparametric detection of utility maximizing behaviour is the central theme in the area of revealed preferences in microeconomics. This is fundamentally different to the theme used widely in the signal processing literature, where one postulates an objective function (typically convex) and then develops optimization algorithms. In contrast, the revealed preference framework, considered in this paper, is {\em data centric}:
Given a dataset, $\dataset$, consisting of probe, $p_t \in \mathbb{R}_+^m$, and response, $x_t  \in \reals_+^m$, of an agent for $T$ time instants:  
\begin{equation}
	\dataset = \left\{(p_t,x_t), t=1,2,\dots,T\right\}. 
	\label{eqn:dataset}
\end{equation}
Revealed preference aims to answer the following question: 
Is the dataset $\dataset$ in~\eqref{eqn:dataset} consistent with utility-maximization behaviour of an agent? 
A utility-maximization behaviour (or utility maximizer) is defined as follows:
\begin{definition} 
	An agent is a utility maximizer if, at each time $t$, for input probe $p_t$, the output response, $x_t$, satisfies 
	\begin{equation}
		x_t = x(p_t) \in \underset{\left\{p_t^\prime x \le I_t\right\}}{\argmax u(x)}.
		\label{eqn:utilitymaximization}
	\end{equation}
	Here, $u(x)$ denotes a locally non-satiated\footnote{Local non-satiation means that for any point, $x$, there exists another point, $y$, within an $\varepsilon$ distance (i.e. $\norm{x-y}\le \varepsilon$), such that the point $y$ provides a higher utility than $x$ (i.e. $u(x)<u(y)$). Local non-satiation models the human preference: \emph{more is preferred to less}} utility function\footnotemark. Also, $I_t\in\mathbb{R}_+$, is the budget of the agent. The linear constraint, $p_t^\prime x \le I_t$ imposes a \emph{budget constraint} on the agent, where $p_t^\prime x$ denotes the inner product between $p_t$ and $x$. 
\end{definition}
\footnotetext{The utility function is a function that captures the preference of the agent. For example, if $x$ is preferred to $y$, $u(x) \ge u(y)$. }
Major contributions to the area of revealed preferences are due to Samuelson~\cite{Samuelson38}, Afriat~\cite{Afr67}, Varian~\cite{Var82}, and Diewert~\cite{Die73} in the microeconomics literature. 
Afriat~\cite{Afr67} devised a nonparametric test (called Afriat's theorem), which provides necessary and sufficient conditions to detect utility maximizing behaviour for a dataset. 
For an agent satisfying utility maximization, Afriat's theorem~\cite{Afr67} (stated below in Theorem~\ref{thm:AfriatTheorem}) provides a method to reconstruct a utility function consistent with the data. 
The utility function, so obtained, can be used to predict future response of the agent. 
Varian~\cite{Var06} provides a comprehensive survey of revealed preference literature. 

Despite being originally developed in economics, there has been some recent work on application of revealed preference to social networks and signal processing. 
In the signal processing literature, revealed preference framework was used for detection of malicious nodes in a social network in~\cite{KH12,BP13} and in demand estimation in smart grids in~\cite{HK15}. 
\cite{CJP10} analyzes social behaviour and friendship formation using revealed preference among high school friends. In online social networks, \cite{JPW07} uses revealed preference to obtain information about products from bidding behaviour in eBay or similar bidding networks. 
\subsection{The Problem: Utility Change Point Detection in Online Social Media.}
In this paper, we consider an extension of the classical revealed preference framework of~\cite{Afr67} to agents with ``dynamic utility function''. The utility function jump changes at an unknown time instant by a linear perturbation. Given the dataset of probe and responses of an agent, the objective is to develop a nonparametric test to detect the change point and the utility functions before and after the change, which is henceforth referred to as the change point detection problem. 

Such change point detection problems arise in online search in social media. 
The online search is currently the most popular method for information retrieval~\cite{SEW04}. 
The online search process can be seen as an example of an agent maximizing the information utility, i.e.\ the amount of information consumed by an online agent given the limited resource on time and attention. 
There has been a gamut of research which links internet search behaviour to ground truths such as symptoms of illness, political election, or major sporting events~\cite{GMPBSMB09,CM09,ZYF11,SSGA10,Doo10,WB09}. 
Detection of utility change in online search, therefore, is helpful to identify changes in ground truth and useful, for example, for early containment of diseases~\cite{CM09} or predicting changes in political opinion~\cite{TSSW10,KJ02}. 
Also, the intrinsic nature of the online search utility function motivates such a study under a revealed preference setting. 

The problem of studying agents with dynamic utility functions, with a linear perturbation change in the utility function is motivated by several reasons. 
First, it provides sufficient selectivity such that the non-parametric test is not trivially satisfied by all datasets but still provides enough degrees of freedom. 
Second, the linear perturbation can be interpreted as the change in the marginal rate of utility relative to a ``base'' utility function. 
In online social media, the linear perturbation coefficients measure the impact of marketing or the measure of severity of the change in ground truth on the utility of the agent. 
This is similar to the linear perturbation models used to model taste changes~\cite{ABBC15,MF12,BrMa98,FIS15} in microeconomics. 
Finally, in social networks, linear change in the utility is usually used to model the change in utility of an agent based on the interaction with the agent's neighbours~\cite{CS10}. 
Compared to the taste change model, our model is unique in that we allow the linear perturbation to be introduced at an unknown time. 
To the best of our knowledge, this is the first time in the literature that change point detection problem has been studied in the revealed preference setting. 

A related important practical issue that we also consider in this paper is the high dimensionality of data arising in online social media. 
As an example of high dimensional data arising in online social media, we investigate the detection of the utility maximization process inherent in one of the most common social media interactions: video sharing via YouTube\footnote{YouTube has millions of videos.\url{https://www.youtube.com/yt/press/statistics.html}}. 
Detecting utility maximization behaviour with such high dimensional (big) data is computationally demanding. 

The organization of the paper is as follows:
In \Sec~\ref{sec:ucpd}, we derive necessary and sufficient conditions for change point detection, for dynamic utility maximizing agents under the revealed preference framework. 
In \Sec~\ref{sec:recnoise}, we study the change point detection problem in the presence of noise. 
Section~\ref{sec:dimensionreduction} address the problem of high dimensional data arising in the context of revealed preference. 
Section~\ref{sec:results} presents numerical results. First, we compare the proposed approach with the popular CUSUM test and corresponding ROC curves are presented. 
Second, we illustrate the result developed on two real world datasets: Yahoo! Tech Buzz dataset and Youstatanalyzer dataset. 
\section{Utility Change Point Detection (Deterministic case)}
\label{sec:ucpd}
In this section, we consider agents with a dynamic utility function. 
For completeness, we start with Afriat's theorem, in the classical static setting. 
Afriat's Theorem\footnotemark is one of the \emph{big} results in revealed preferences in micro-economics theory. \footnotetext{To the signal processing reader unfamiliar with this theorem, it can be viewed as a set-valued system identification method for an $\argmax$ nonlinear system with a constraint on the inner product of the probe and response of a system. Afriat's theorem has several interesting consequences including the fact that if a dataset is consistent with utility maximization, then it is rationalizable by a concave, monotone and continuous utility function. Hence, the preference of the agent represented by a concave utility function can never be refuted based on a finite dataset, see~\cite{Var06}. Further, we can always impose monotone and concave restrictions on the utility function with no loss of generality. }
\begin{theorem}[Afriat's Theorem~\cite{Afr67}]{Given the dataset $\mathcal{D}$ in~\eqref{eqn:dataset}, the following statements are equivalent:}
	\begin{compactenum}
	\item The agent is a utility maximizer and there exists a monotonically increasing\footnote{In this paper, we use monotone and local non-satiation interchangeably. Afriat's theorem was originally stated for a non-satiated utility function.} and concave\footnote{Concavity of utility function models the human preference: \emph{averages are better than the extremes}. It is also related to the law of diminishing marginal utility, i.e.\ the rate of utility decreases with $x$. }utility function that satisfies (\ref{eqn:utilitymaximization}). 
		\item For $u_t$ and $\lambda_t>0$ the following set of inequalities has a feasible solution:
			\begin{equation}
				\utility_\sindx-\utility_\tindx-\lambda_\tindx \probe_\tindx^\p (\response_\sindx-\response_\tindx) \leq 0 \; \forall \tindx,\sindx\in\{1,2,\dots,\Tindxter\}.\
				\label{eqn:AfriatFeasibilityTest}
			\end{equation}
		\item A monotonic and concave utility function that satisfies (\ref{eqn:utilitymaximization}) is given by:
			\begin{equation}
				\utility(\response) = \underset{\tindx\in \{1,2,\dots,T\}}{\operatorname{min}}\{u_\tindx+\lambda_\tindx \probe_\tindx^\p(\response-\response_\tindx)\}
				\label{eqn:estutility}
			\end{equation}
	  	\item The dataset $\mathcal{D}$ satisfies the Generalized Axiom of Revealed Preference (GARP), namely for any $\tindx \leq T$, $p_t^\p x_t \geq p_t^\p x_{t+1} \quad \forall t\leq k-1 \implies p_k^\p  x_k \leq p_k^\p  x_{1}.\qed$ 
	\end{compactenum}
\label{thm:AfriatTheorem}
\end{theorem}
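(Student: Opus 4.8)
The plan is to prove the four statements equivalent by establishing the cycle of implications $(1)\Rightarrow(4)\Rightarrow(2)\Rightarrow(3)\Rightarrow(1)$, which is the standard route since $(3)\Rightarrow(1)$ comes essentially for free. For $(1)\Rightarrow(4)$ I would first establish the basic revealed-preference inequality: whenever $\probe_\tindx^\p \response_\tindx \ge \probe_\tindx^\p \response_\sindx$, so that $\response_\sindx$ was affordable at time $\tindx$, optimality of $\response_\tindx$ in~\eqref{eqn:utilitymaximization} forces $\utility(\response_\tindx)\ge\utility(\response_\sindx)$; and if the price inequality is strict, local non-satiation upgrades this to $\utility(\response_\tindx)>\utility(\response_\sindx)$, since a strictly cheaper affordable bundle leaves budgetary room to move to a strictly preferred nearby point. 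Chaining these relations along a revealed-preference sequence $\response_1,\dots,\response_k$ and assuming a GARP violation would produce a cycle $\utility(\response_k)\le\dots\le\utility(\response_k)$ containing at least one strict step, a contradiction; hence GARP holds.

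The combinatorial heart is $(4)\Rightarrow(2)$. Here I would read the Afriat inequalities~\eqref{eqn:AfriatFeasibilityTest} as a system in the unknowns $\utility_\tindx$ and $\lambda_\tindx>0$, and construct a feasible solution directly from the revealed-preference structure of $\dataset$. One natural approach is to build a weighted directed graph on the $\Tindxter$ data points with edges carrying the quantities $\probe_\tindx^\p(\response_\sindx-\response_\tindx)$, and to recognize the Afriat inequality $\utility_\sindx-\utility_\tindx\le\lambda_\tindx\probe_\tindx^\p(\response_\sindx-\response_\tindx)$ as a difference constraint on the ``potentials'' $\utility_\tindx$ once the multipliers are fixed. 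GARP is precisely the absence of the kind of contradictory cycle that would obstruct consistent potentials, so a feasible set of $\utility_\tindx$ exists; the positive multipliers $\lambda_\tindx$ then provide the extra scaling freedom needed to absorb the residual slack, chosen by a sequential or minimal rule over the finitely many constraints. This passage from the no-cycle condition to explicit feasibility is where I expect the real work to lie.

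For $(2)\Rightarrow(3)$, given feasible $\utility_\tindx,\lambda_\tindx$, I would define $\utility(\response)=\min_\tindx\{\utility_\tindx+\lambda_\tindx\probe_\tindx^\p(\response-\response_\tindx)\}$ as in~\eqref{eqn:estutility}. As a pointwise minimum of affine functions it is concave, and since each $\lambda_\tindx>0$ and $\probe_\tindx\ge0$ it is monotone. The Afriat inequalities guarantee that the minimum at $\response_\sindx$ is attained by the $\sindx$-th affine piece, giving $\utility(\response_\sindx)=\utility_\sindx$, and a short argument using the budget constraint together with the affine first-order structure shows that $\response_\tindx$ maximizes this $\utility$ over $\{\probe_\tindx^\p\response\le\budget_\tindx\}$, establishing~\eqref{eqn:utilitymaximization}. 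Finally $(3)\Rightarrow(1)$ is immediate, since the function exhibited in $(3)$ is a monotone concave utility function rationalizing $\dataset$, which is exactly the assertion of $(1)$. The main obstacle throughout is the constructive step $(4)\Rightarrow(2)$; the remaining implications are either direct verifications or a single contradiction argument.
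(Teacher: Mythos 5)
First, a point of comparison: the paper does not prove this theorem at all --- it is quoted from Afriat~\cite{Afr67} (with the feasibility check delegated to linear programming or Warshall's algorithm, per~\cite{Var82,Var06}), so your attempt can only be measured against the standard literature proof. Your overall architecture matches that standard proof: the cycle $(1)\Rightarrow(4)\Rightarrow(2)\Rightarrow(3)\Rightarrow(1)$, with $(1)\Rightarrow(4)$ via the revealed-preference relation plus local non-satiation (the strict-budget-slack upgrade to strict preference is exactly right), $(2)\Rightarrow(3)$ via the lower envelope of affine overestimates with $u(x_s)=u_s$ pinned down by the inequalities~\eqref{eqn:AfriatFeasibilityTest}, and $(3)\Rightarrow(1)$ trivial. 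Those three implications are correct as sketched.

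The genuine gap is in $(4)\Rightarrow(2)$, and it is not merely an omitted computation: the difference-constraints reduction you propose does not work as stated. Feasibility of $u_s-u_t\le\lambda_t\,p_t^\prime(x_s-x_t)$ for the potentials $u_t$ is equivalent to the absence of negative-weight cycles only \emph{after} the $\lambda_t$ are fixed, but the system is bilinear in $(u_t,\lambda_t)$ jointly, and GARP does not rule out negative cycles for an arbitrary positive choice of $\lambda$. What GARP delivers is cyclical consistency of the raw quantities $p_t^\prime(x_s-x_t)$: along any cycle in which every such term is nonpositive, all terms vanish. A two-point example with $p_1^\prime(x_2-x_1)=-1$ and $p_2^\prime(x_1-x_2)=+2$ satisfies GARP, yet the cycle weight $\lambda_1(-1)+\lambda_2(+2)$ is negative unless $\lambda_2\ge\lambda_1/2$ --- so the multipliers must be constructed from the data, and proving that a valid choice always exists under GARP is precisely the content of Afriat's lemma (or, in the modern treatment of Fostel, Scarf and Todd, an induction on $T$: partition the observations by the transitive closure of the direct revealed-preference relation, assign $u_t,\lambda_t$ level by level, and take each new $\lambda_t$ large enough to dominate the finitely many cross-level constraints, with GARP guaranteeing the required maxima are attained). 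Your closing remark that the $\lambda_t$ are ``chosen by a sequential or minimal rule'' gestures at this, but without that construction the feasibility claim --- the combinatorial heart you yourself identify --- remains unproven.
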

The remarkable property of Afriat's Theorem is that it gives necessary and sufficient conditions for the dataset to satisfy utility maximization~\eqref{eqn:utilitymaximization}. 
The feasibility of the set of inequalities can be checked using a linear programming solver or by using Warshall's algorithm with $O(T^3)$ computations~\cite{Var06}~\cite{Var82}. 
A utility function consistent with the data can be constructed using~\eqref{eqn:estutility}. The recovered utility is not unique since any monotonic transformation of~\eqref{eqn:estutility} also satisfy Afriat's Theorem. 
\subsection{System Model}
In this paper, we consider an agent that maximizes a utility function that jump changes by a linear perturbation at a time that is unknown to the observer. The aim is to estimate the utility before and after the change, and the change point.  
Consider an agent that selects a response $\response$ at time $t$ to maximize the utility function given by:
\begin{equation}
	u(\response,\alpha;t) = v(\response) + \alpha^\prime \response \mathds{1}\{t\ge \tau\},
	\label{eqn:linmodel}
\end{equation}
subject to the following budget constraint $\probe_t^\prime \response \le I_t$. 
Here, $\mathds{1}\{\cdot \}$ denotes the indicator function. 
The utility function, $u(\response,\alpha;t)$ consists of two components: a base utility function, $v(\response)$, and a linear perturbation, $\alpha^\prime \response$, which occurs at an unknown time $\tau$. 
The base utility function, $v(\response)$ is assumed to be monotonic and concave. 
We will restrict the components of the vector $\alpha$ to be (strictly) greater than 0, so that the utility function, $u$, conditioned on $\alpha$ is monotonic and concave. 
The objective is to derive necessary and sufficient conditions to detect the time, $\tau$ at which linear perturbation is introduced to the base utility function.  
Theorem~\ref{thm:AC:Linear} summarizes the necessary and sufficient conditions to detect the change in utility function according to the model in~\eqref{eqn:linmodel} and the proof is in Appendix~\ref{appendix:proof:AC:linear}. 
\begin{theorem}
	\label{thm:AC:Linear}
	The dataset $\mathcal{D}$ in~\eqref{eqn:dataset} is consistent with the model in~\eqref{eqn:linmodel} if we can find set of scalars $\{v_t\}_{t=1,\dots,T}$, $\{\lambda_t > 0\}_{t=1,\dots,T}$, $\{\alpha_k\}_{k=1,\dots,m}$, such that there exists a feasible solution to the following inequalities:
	\begin{align}
	 	v_t + \lambda_t \probe_t^\prime (\response_s - \response_t) 		&\ge v_s 			&  (t < \tau)   \label{eqn:IE1}\\
		v_t + \lambda_t \probe_t^\prime (\response_s - \response_t) - \alpha^\prime (\response_s -\response_t) & \ge v_s 	&  (t \ge \tau) \label{eqn:IE2}\\
		\alpha_i &\le \lambda_t \probe_t^i & (\forall i, t \ge \tau) \label{eqn:IE3},
	\end{align}
where $p_t^i$ is the $i$\textsuperscript{th} component of the probe $p_t$. 
\end{theorem}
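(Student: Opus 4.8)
The plan is to show that feasibility of \eqref{eqn:IE1}--\eqref{eqn:IE3} is equivalent to consistency with the dynamic model \eqref{eqn:linmodel}, mirroring the structure of Afriat's Theorem (Theorem~\ref{thm:AfriatTheorem}) but accounting for the fact that the \emph{same} base utility $v$ governs both regimes while the observed first-order conditions differ before and after $\tau$. I would prove the two implications separately. The conceptual engine throughout is the concave supergradient inequality $v(\response_s)\le v(\response_t)+g_t^\p(\response_s-\response_t)$ together with the Karush--Kuhn--Tucker condition at a constrained maximizer; local non-satiation forces each budget constraint to bind, so $\probe_t^\p\response_t=I_t$ at every $t$, which is what lets me replace the budget set by $\{\response:\probe_t^\p\response\le\probe_t^\p\response_t\}$.

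\emph{Necessity.} Assume $\dataset$ is consistent with \eqref{eqn:linmodel}, so there is a monotone concave $v$ and a vector $\alpha$ with positive entries such that $\response_t$ maximizes $v$ for $t<\tau$ and the perturbed utility $v+\alpha^\p\response$ for $t\ge\tau$ on the budget set. Put $v_t=v(\response_t)$. At a binding maximizer the stationarity condition yields a supergradient of the active objective proportional to $\probe_t$ with multiplier $\lambda_t>0$: for $t<\tau$ this is a supergradient $\lambda_t\probe_t$ of $v$ at $\response_t$, while for $t\ge\tau$ a supergradient of $v+\alpha^\p\response$ equals $\lambda_t\probe_t$, so that $\lambda_t\probe_t-\alpha$ is a supergradient of $v$ itself. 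Feeding these into the supergradient inequality produces exactly \eqref{eqn:IE1} for $t<\tau$ and \eqref{eqn:IE2} for $t\ge\tau$. Finally, since $v$ is monotone its supergradients are nonnegative, and for $t\ge\tau$ nonnegativity of $\lambda_t\probe_t-\alpha$ is precisely \eqref{eqn:IE3}.

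\emph{Sufficiency.} Given a feasible solution $\{v_t\},\{\lambda_t\},\{\alpha_i\}$, I would explicitly construct the base utility by the Afriat-style pointwise minimum $v(\response)=\min_{r}\{v_r+g_r^\p(\response-\response_r)\}$ with tilted supergradients $g_r=\lambda_r\probe_r-\alpha\,\mathds{1}\{r\ge\tau\}$. Being a minimum of affine functions, $v$ is concave; each $g_r\ge 0$ (using $\lambda_r>0$ and $\probe_r\ge 0$ for $r<\tau$, and \eqref{eqn:IE3} for $r\ge\tau$), so $v$ is monotone. The unified inequality $v_t+g_t^\p(\response_s-\response_t)\ge v_s$, which is just \eqref{eqn:IE1}/\eqref{eqn:IE2} rewritten, guarantees both $v(\response_t)=v_t$ and that $g_t$ is a supergradient of $v$ at $\response_t$. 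Then for $t<\tau$ and any feasible $\response$, concavity gives $v(\response)\le v_t+\lambda_t\probe_t^\p(\response-\response_t)\le v_t$ since $\probe_t^\p\response\le\probe_t^\p\response_t$ and $\lambda_t>0$, so $\response_t$ maximizes $v$; for $t\ge\tau$ a supergradient of $\utility=v+\alpha^\p\response$ at $\response_t$ is $g_t+\alpha=\lambda_t\probe_t$, and the same one-line budget argument shows $\response_t$ maximizes $\utility$. Hence $\dataset$ is rationalized by \eqref{eqn:linmodel}.

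\emph{Main obstacle.} The delicate point is not either direction in isolation but the \emph{coupling}: a single $v$ must rationalize the pre-change data directly and the post-change data only after the additive tilt $\alpha^\p\response$. The design choice that resolves this is to let the supergradients in the min-construction carry the correction $-\alpha$ for $r\ge\tau$, so that adding $\alpha$ back restores a supergradient proportional to $\probe_t$ exactly where the post-change optimality must hold. I expect the care to concentrate on (i) verifying monotonicity of the constructed $v$, which is where \eqref{eqn:IE3} is indispensable and has no analogue in classical Afriat, and (ii) handling non-differentiability by working with supergradients and KKT conditions rather than gradients throughout.
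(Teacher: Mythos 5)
Your proposal is correct and takes essentially the same route as the paper: necessity via first-order conditions for the active objective (the paper uses gradients at an interior optimum where you use supergradients/KKT, a harmless refinement), and sufficiency via the identical lower-envelope construction $\min_t\{v_t+\lambda_t\tilde p_t^\prime(x-x_t)\}$ with $\lambda_t\tilde p_t=\lambda_t p_t-\alpha\,\mathds{1}\{t\ge\tau\}$, which is exactly your $g_t$. Your verification steps --- that $\mathcal{V}(x_t)=v_t$, that monotonicity rests on \eqref{eqn:IE3}, and that adding $\alpha$ back to the tilted supergradient restores proportionality to $p_t$ in the post-change budget argument --- match the paper's appendix proof point for point.
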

The inequalities in~\cref{eqn:IE1,eqn:IE2,eqn:IE3} closely resemble the Afriat inequalities~\eqref{eqn:AfriatFeasibilityTest}. 
The time instant $\tau$ at which the inequalities are satisfied is the time at which the linear perturbation is introduced. 

\subsection{Recovery of minimum perturbation of $\alpha$ and the base utility function}
\label{subsec:recover:alpha}
Computing the linear perturbation coefficients in~\eqref{eqn:linmodel} gives an indication of the severity of the ground truth or the effect of marketing and advertising in social media. 
The solution to the following convex optimization provides the minimum value of the perturbation coefficients:
\begin{alignat}{3}
	{\text{min }}   & \norm{\alpha}_2^2 \span \label{opt:recover:alpha:no:noise} \\
	 \text{s.t. }   & 
	 \scalebox{0.95}{$v_t + \lambda_t \probe_t^\prime (\response_s - \response_t)$} & \ge \scalebox{0.95}{$v_s \phantom{\probe_t^i}   \phantom{\forall i}(t < \tau)$}   \label{eqn:ineqcons:1}\\
	 & \scalebox{0.95}{$v_t + \lambda_t \probe_t^\prime (\response_s - \response_t) - \alpha^\prime (\response_s -\response_t)$} & \ge \scalebox{0.95}{$v_s \phantom{\probe_t^i}	   \phantom{\forall i,}(t \ge \tau)$} \label{eqn:ineqcons:2}\\
	 &\scalebox{0.95}{$\phantom{v_t + \lambda_t \probe_t^\prime (\response_s - \response_t) - \alpha^\prime (\response_s -)} \alpha_i$} &\le \scalebox{0.95}{$\lambda_t \probe_t^i  (\forall i, t \ge \tau)$}  \label{eqn:ineqcons:3} \\
	 & \scalebox{0.95}{$\lambda_t > 0$}\span \nonumber\\
	 & \scalebox{0.95}{$v_1 = \beta, \quad \lambda_1 = \delta,$}\span \label{eqn:eqcons}
\end{alignat}
where, $\beta$ and $\delta$ are arbitrary constants. 

The equations~\eqref{eqn:ineqcons:1} to~\eqref{eqn:ineqcons:3} correspond to the revealed preference inequalities~\cref{eqn:IE1,eqn:IE2,eqn:IE3}. 
The normalization conditions~\cref{eqn:eqcons} are required because of the ordinality\footnotemark of the utility function. \footnotetext{Clearly any positive monotonic transformation of $u(x)$ in~\eqref{eqn:utilitymaximization} gives the same response. }
This is because for any set of feasible values of $\{v_t\}_{t=1,\dots,T}$, $\{\lambda_t\}_{t=1,\dots,T}$, $\{\alpha_k\}_{k=1,\dots,m}$ satisfying the constraints in Theorem~\ref{thm:AC:Linear} the following relation also holds
\begin{equation*}
	\beta (v_s + \delta) - \beta (v_t + \delta) - \beta \lambda_t {p}_t^\prime (x_s-x_t)  + \beta \alpha^\prime (x_s - x_t) \leq 0. 
\end{equation*}
Recall that the base utility function $v(x)$, is the utility function before the linear change. 
\begin{corollary}
	\label{cor:recover:base}
	The recovered base utility function is
	\begin{equation}
		\hat{v}(x) = \underset{t}{\min} \{ v_t + \lambda_t \tilde{p}_t^\prime (x-x_t)\},
		\label{eqn:recover:base}
	\end{equation}
	where
	\begin{equation}
  		\tilde{p}_t^i =  \begin{cases}
  		  			p_t^i & t < \tau, \\
  		      		p_t^i - \alpha_i/\lambda_t & t \ge \tau.
  		      	  \end{cases}
  		\label{eqn:ptilde:corr}
	\end{equation}
	In~\cref{eqn:recover:base,eqn:ptilde:corr} $\{v_t\}$, $\{\lambda_t\}$, $\{\alpha_k\}$ are the solution of~\cref{opt:recover:alpha:no:noise,eqn:ineqcons:1,eqn:ineqcons:2,eqn:ineqcons:3,eqn:eqcons}. 
\end{corollary}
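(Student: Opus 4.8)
The plan is to reduce the statement to the static Afriat construction \eqref{eqn:estutility} applied to a suitably \emph{repriced} version of the dataset. The guiding intuition is first-order. If $x_t$ maximizes the post-change utility $v(x)+\alpha'x$ over the budget set $\{p_t'x\le I_t\}$ for $t\ge\tau$, then at the optimum the stationarity condition of the Lagrangian reads $\nabla v(x_t)+\alpha=\lambda_t p_t$, so that $\nabla v(x_t)=\lambda_t(p_t-\alpha/\lambda_t)$. Hence the same bundle $x_t$ is revealed-preference optimal for the \emph{base} utility $v$ alone, but evaluated under the virtual price $\tilde{p}_t=p_t-\alpha/\lambda_t$ with the same multiplier $\lambda_t$. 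For $t<\tau$ there is no perturbation and $\tilde{p}_t=p_t$. This is exactly the definition of $\tilde{p}_t$ in \eqref{eqn:ptilde:corr}, and it suggests that $v$ should satisfy the classical Afriat inequalities on the modified dataset $\{(\tilde{p}_t,x_t)\}$.

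Second, I would verify algebraically that the inequalities \eqref{eqn:IE1}--\eqref{eqn:IE2} of Theorem~\ref{thm:AC:Linear} are \emph{precisely} the Afriat inequalities \eqref{eqn:AfriatFeasibilityTest} for this repriced dataset. For $t<\tau$ this is immediate since $\tilde{p}_t=p_t$. For $t\ge\tau$, substituting $\tilde{p}_t=p_t-\alpha/\lambda_t$ gives
\[
\lambda_t\,\tilde{p}_t'(x_s-x_t)=\lambda_t\,p_t'(x_s-x_t)-\alpha'(x_s-x_t),
\]
so the Afriat inequality $v_s-v_t-\lambda_t\tilde{p}_t'(x_s-x_t)\le 0$ collapses onto \eqref{eqn:IE2}. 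Thus any feasible solution $\{v_t\},\{\lambda_t\},\{\alpha_k\}$ furnished by Theorem~\ref{thm:AC:Linear} is simultaneously a feasible Afriat system for $\{(\tilde{p}_t,x_t)\}$, with utility levels $v_t$ and multipliers $\lambda_t>0$.

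Third, I would invoke the static Afriat theorem (Theorem~\ref{thm:AfriatTheorem}, the implication from statement~(2) to statement~(3)) on the repriced dataset: the feasibility just established guarantees that $\hat{v}(x)=\min_t\{v_t+\lambda_t\tilde{p}_t'(x-x_t)\}$ is a concave, monotone utility rationalizing $\{(\tilde{p}_t,x_t)\}$, which is exactly the recovered base utility \eqref{eqn:recover:base}. The one point requiring care — and the main obstacle — is \textbf{monotonicity}: Afriat's construction yields a monotone increasing $\hat{v}$ only when the supporting gradients $\lambda_t\tilde{p}_t$ are nonnegative, so I must check that each virtual price is a legitimate (nonnegative) price vector. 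This is precisely where inequality \eqref{eqn:IE3} enters: the condition $\alpha_i\le\lambda_t p_t^i$ for all $i$ and all $t\ge\tau$ is equivalent to $\tilde{p}_t^i=p_t^i-\alpha_i/\lambda_t\ge 0$, so $\tilde{p}_t\ge 0$ and the recovered base utility inherits monotonicity from the static theorem. I would close by noting, as in the static case, that $\hat{v}$ is determined only up to a positive monotone transformation, consistent with the ordinality remark following \eqref{eqn:eqcons}.
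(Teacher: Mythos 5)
Your repricing reduction is the right idea, and it matches the paper's own construction: the corollary is, in the paper, a by-product of the sufficiency part of the proof of Theorem~\ref{thm:AC:Linear} in Appendix~\ref{appendix:proof:AC:linear}, where exactly the function \eqref{eqn:recover:base} with the virtual prices \eqref{eqn:ptilde:corr} is built. Your algebraic check that \eqref{eqn:IE1}--\eqref{eqn:IE2} are the Afriat inequalities \eqref{eqn:AfriatFeasibilityTest} for the repriced dataset is correct, and your observation that \eqref{eqn:IE3} is what makes $\tilde{p}_t \ge 0$ (hence monotonicity of the recovered utility) is a point the paper leaves implicit and is worth making explicit. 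However, there is a genuine, if small, gap at your final step. By Theorem~\ref{thm:AfriatTheorem}, rationalizing the repriced dataset $\{(\tilde{p}_t,x_t)\}$ means that $x_t$ maximizes $\hat{v}$ over the \emph{virtual} budget set $\{\tilde{p}_t^\prime x \le \tilde{p}_t^\prime x_t\}$, whereas the corollary requires that $x_t$ maximize the perturbed utility $\hat{v}(x)+\alpha^\prime x$ over the \emph{original} budget set $\{p_t^\prime x \le p_t^\prime x_t\}$ for $t \ge \tau$. These budget sets do not coincide: for $\hat{x}$ with $p_t^\prime \hat{x} \le p_t^\prime x_t$ one has $\tilde{p}_t^\prime(\hat{x}-x_t) = p_t^\prime(\hat{x}-x_t) - \alpha^\prime(\hat{x}-x_t)/\lambda_t$, and the sign of $\alpha^\prime(\hat{x}-x_t)$ is unrestricted, so $\hat{x}$ need not be virtually feasible. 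Your opening KKT computation is the necessity direction; the sufficiency direction---that optimality under virtual prices transfers back to optimality of the perturbed utility under actual prices---is what the corollary needs, and as written it is asserted rather than proved.

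The fix is short and is precisely what the paper's Appendix~\ref{appendix:proof:AC:linear} does. Feasibility of the inequalities gives $\hat{v}(x_t)=v_t$ for every $t$ (the minimum in \eqref{eqn:recover:base} is attained by the $t$-th piece at $x_t$; otherwise \eqref{eqn:IE1} or \eqref{eqn:IE2} would be violated), so $\lambda_t\tilde{p}_t$ is a supergradient of $\hat{v}$ at $x_t$. Then for any $\hat{x}$ with $p_t^\prime\hat{x}\le p_t^\prime x_t$ and $t\ge\tau$,
\begin{align*}
\hat{v}(\hat{x}) + \alpha^\prime\hat{x}
&\le v_t + \lambda_t \tilde{p}_t^\prime(\hat{x}-x_t) + \alpha^\prime\hat{x}\\
&= v_t + \lambda_t p_t^\prime(\hat{x}-x_t) + \alpha^\prime x_t\\
&\le v_t + \alpha^\prime x_t = \hat{v}(x_t) + \alpha^\prime x_t,
\end{align*}
where the last inequality uses $\lambda_t>0$; the case $t<\tau$ is the same chain without the $\alpha$ terms. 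Equivalently, note that $\lambda_t\tilde{p}_t+\alpha=\lambda_t p_t$ is a supergradient of the concave function $\hat{v}(x)+\alpha^\prime x$ at $x_t$, proportional to $p_t$ with positive multiplier $\lambda_t$, so KKT sufficiency for the concave program applies. With this bridge inserted, your proof is complete and is essentially the paper's argument repackaged through the static Afriat theorem; the repackaging is attractive because it reuses Theorem~\ref{thm:AfriatTheorem} wholesale and isolates the role of \eqref{eqn:IE3}, but it does not eliminate the need for the budget-set transfer above.
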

\subsection{Comparison with classical change detection algorithms}
\begin{table*}[t]
\normalsize
	\begin{tabular}{c|c|c|c}
		\toprule
		Method & Data model & Change model & Reference \\
		\midrule
		CUSUM &  $x_t \overset{i.i.d}{\sim} p_\theta$&  $\theta = \begin{cases} \theta_0 & t< \tau \\ \theta_1 & t \ge \tau \end{cases}$& \cite{BN93} \\
		Semi-supervised/Supervised &  \scalebox{0.9}{$\dataset = \left\{\left((p_1,I_1),U(p_1,I_1)\right),\dots,\left((p_T,I_T),U(p_T,I_T)\right)\right\}$}&  Not Applicable & \cite{BDMUV14,ZR12,BV06}\\
		Learning & \scalebox{0.9}{$\left(p_i,I_i\right) \sim P$, $U(p_i,x_i)$: Optimal response for utility $U$}&  & \cite{CSZ10}\\
		Revealed Preference & $x_t = \underset{p_t^\prime x_t \le I_t}{\argmax\;} u(x_t)$&  $u(x) = \begin{cases} v(x) & t< \tau \\ v(x) + \alpha^\prime x  & t \ge \tau \end{cases}$& This paper \\
		\bottomrule
	\end{tabular}
	\caption{Comparison of Revealed Preference with classical change detection algorithm. 
	\label{tab:compare:rp:classical}}
\end{table*}
Table~\ref{tab:compare:rp:classical} compares the revealed preference framework of our paper with classical change detection algorithms. 
The key difference is that the revealed preference framework considers a system with maximization of a utility function subject to linear constraints (the budget constraint). 
In comparison, a classical CUSUM type change detection algorithm requires knowledge of a parametrized  utility function (see \Sec~\ref{subsec:results:change:detection:noise} for a numerical example when $v(x)$ is a Cobb-Douglas\footnotemark utility function). 
The revealed preference framework for change detection makes no such assumptions. \footnotetext{The Cobb-Douglas is a widely used utility function in economics. When $m=2$, i.e.\ the dimension of the probe and response is $2$, the utility function can be expressed as $u(x) = x_1^{a} x_2^{b}$. The utility function is parameterized by $a$ and $b$. }
The revealed preference problem is related to the supervised learning literature when the parametric class of functions for empirical risk minimization (ERM) is limited to concave and monotone functions~\cite{BDMUV14,ZR12,BV06}. The change detection problem can be thought of as a multi-class learning problem with the first class being the utility function before the change and the second class being the utility function after the change.  
However, this paper provides an algorithmic approach to detect change points by deriving necessary and sufficient conditions. 

\section{Utility change point detection in noise}
\label{sec:recnoise}
\Sec~\ref{sec:ucpd} dealt with utility change point detection in the deterministic case. In this section, we consider the change point detection problem when the response of the agent is measured in noise. 
\subsection{Classical revealed preference in a noisy setting}
\label{subsec:classicalrevpref}
\begin{figure}[!t]
    	\centering
	\includegraphics[scale=0.5]{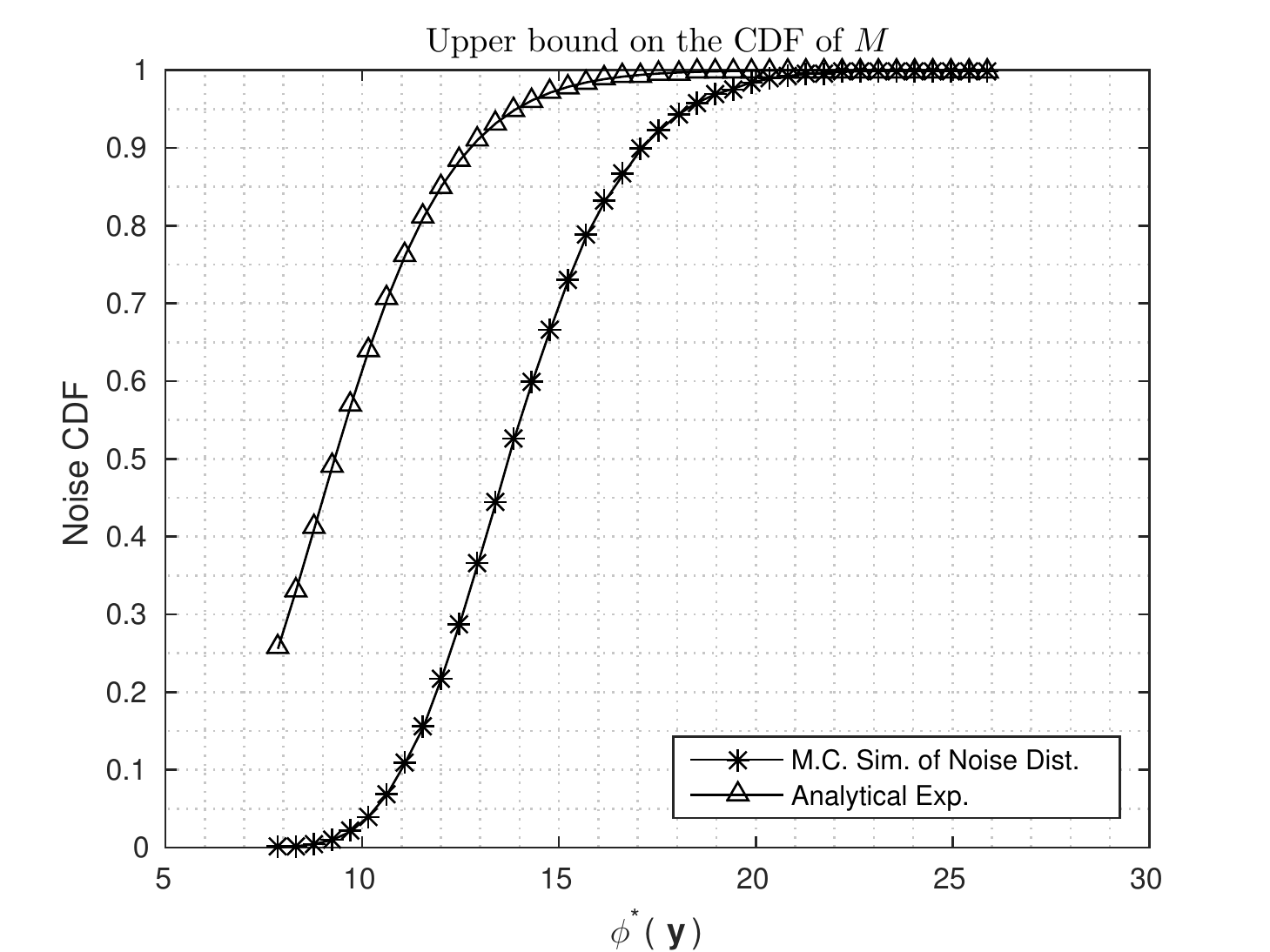}
    	\caption{\small Comparison of analytical expression with M.C.\ simulation: Upper bound on the CDF of $M$, as defined in~\eqref{eqn:def:M}. (Lower bound on the false alarm probability.)}
	\label{fig:M:LB}
\end{figure}
Afriat's theorem (Theorem~\ref{thm:AfriatTheorem}) assumes perfect observation of the probe and response. 
However, when the response of the agents are measured in noise, the failure of Afriat test could be either due to measurement noise or absence of utility maximization. 
Below, we assume the additive noise model for measurement errors given by:
\begin{equation}
	y_\tindx = x_\tindx + w_\tindx,
	\label{eqn:noisemodel}
\end{equation}
where $y_\tindx$ is the noisy measurement of response $x_\tindx$ and $w_\tindx \in \mathbb{R}^m$ is the independent and identically distributed (i.i.d) standard Gaussian noise\footnote{Although we consider the zero mean, unit variance Gaussian the extension to arbitrary mean $\mu$ and variance $\sigma^2$ is immediate.}. 

Given the noisy dataset 
\begin{equation}
	\obsdataset= \left\{\left(p_\tindx,y_\tindx \right): \tindx \in \left\{1,\dots,T \right\}\right\},
	\label{eqn:noisydataset}
\end{equation}
\cite{KH12} proposes the following statistical test for testing utility maximization~\eqref{eqn:utilitymaximization} in a dataset due to measurement errors. 
Let $H_0$ denote the null hypothesis that the dataset $\obsdataset$ in~\eqref{eqn:noisydataset} satisfies utility maximization. 
Similarly, let $H_1$ denote the alternative hypothesis that the dataset does not satisfy utility maximization. 
There are two possible sources of error:
\begin{align}
	\text{\bf Type-I errors:}   &\text{\hspace{1.6mm}Reject $H_0$ when $H_0$ is valid.} \nonumber\\
	\text{\bf Type-II errors:}  &\text{\hspace{1.6mm}Accept $H_0$ when $H_0$ is invalid.}
	\label{eqn:hypothesis}
\end{align}
The following statistical test can be used to detect if an agent is seeking to maximize a utility function. 
\begin{equation}
	\boxed{\phantom{\text{\hspace{0.2mm}}}\int\limits_{\Phi^*(\bf\obsresponse)}^{+\infty}f_M(\psi)\mathrm{d}\psi \overset{H_0}{\underset{H_1}{\gtrless}} \gamma}. 
	\label{eqn:Statistical_Test}
\end{equation}
In the statistical test, (\ref{eqn:Statistical_Test}): \\ (i) $\gamma$ is the ``significance level'' of the statistical test. 
\\(ii) The ``test statistic'' $\Phi^*(\bf\obsresponse)$, with ${\bf\obsresponse}=\left[\obsresponse_1,\obsresponse_2,\dots,\obsresponse_T\right]$ is the solution of the following constrained optimization problem :
\begin{equation}
\begin{array}{rl}
\min & \Phi \\
\mbox{s.t.} & u_{\sindx}-u_{\tindx}- \lambda_\tindx \probe_\tindx^\prime (\obsresponse_\sindx -\obsresponse_\tindx)-\lambda_\tindx \Phi \leq 0 \quad  \\
& \lambda_t > 0 \quad \Phi \geq 0 \quad\text{for}\quad \tindx,\sindx\in \{1,2,\dots,T\}.\label{eqn:AE}
\end{array}
\end{equation}
(iii) $f_M$ is the pdf of the random variable $M$ where
\begin{equation}
	M\triangleq\underset{\underset{\tindx \ne \sindx}{\tindx,\sindx}}{\max}\left[\probe_\tindx^\prime(\noise_\tindx - \noise_\sindx)\right]. 
	\label{eqn:def:M}
\end{equation}
The probability of false alarm or Type-I error, the probability of rejecting $H_0$, when true, is given by $\Pr\left\{M \ge \Phi^*(\bf\obsresponse)\right\}$. 

Below, we derive an analytical expression for the lower bound for the false alarm probability of the statistical test in~\eqref{eqn:Statistical_Test}. 
The motivation comes from the following fact: Given the significance level of the statistical test in~\eqref{eqn:Statistical_Test}, a Monte Carlo (M.C.) simulation is required to compute the threshold. 
However, from an analytical expression for the lower bound on false alarm probability, we can obtain an upper bound of the test statistic, denoted by $\overline{\Phi^*(\bf \obsresponse)}$. 
Hence, for any dataset $\obsdataset$ in~\eqref{eqn:noisydataset}, if the solution to the optimization problem~\eqref{eqn:AE} is such that $\Phi >  \overline{\Phi^*(\bf \obsresponse)}$, then the dataset does not satisfy utility maximization, for the desired false alarm probability. 

Theorem~\ref{thm:lowerbound} provides the lower bound on the false alarm probability and the proof is provided in Appendix~\ref{appendix:lowebound}. 
\begin{theorem}
	If the noise components have a standard Gaussian distribution, then the probability of false alarm is lower bounded by 
	\begin{equation}
		1 - \underset{t}{\prod} \left\{1 - \sqrt{\frac{2}{\pi}} \frac{\sqrt{2{\norm{p_t}}^2}\exp{(-{\Phi^*({\bf y})}^2/4{\norm{p_t}}^2)}}{{\Phi^*({\bf y})} + \sqrt{{\Phi^*({\bf y})}^2+8{\norm{p_t}}^2}}\right\}.
	  	\label{eqn:lowerbound}
	\end{equation}
	\label{thm:lowerbound}
\end{theorem}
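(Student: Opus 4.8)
The plan is to restate the false-alarm probability as a tail of $M$ and to bound the corresponding CDF. By \eqref{eqn:def:M} the false-alarm probability is $\Pr\{M \ge \Phi^*(\mathbf{y})\} = 1 - \Pr\{M < \Phi^*(\mathbf{y})\}$, so a lower bound on the false alarm is equivalent to an upper bound on the CDF $\Pr\{M < \Phi^*(\mathbf{y})\}$. Writing the event $\{M < \Phi^*(\mathbf{y})\}$ as the intersection $\bigcap_{t \ne s}\{p_t^\prime(w_t - w_s) < \Phi^*(\mathbf{y})\}$, I would first record the elementary distributional fact that, for any fixed pair $t \ne s$, the difference $w_t - w_s$ is $N(0, 2\mathbf{I})$, hence $p_t^\prime(w_t - w_s)$ is a zero-mean Gaussian with variance $2\norm{p_t}^2$. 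Consequently each per-pair exceedance probability equals the Gaussian tail $Q\!\left(\Phi^*(\mathbf{y})/\sqrt{2\norm{p_t}^2}\right)$, and the whole proof reduces to (i) bounding this tail from below and (ii) combining the pairs into the product in \eqref{eqn:lowerbound}.

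The technical core is step (i), for which I would invoke the Komatsu lower bound on the Gaussian tail,
\[
Q(x) \ge \frac{1}{\sqrt{2\pi}}\,\frac{2}{x + \sqrt{x^2 + 4}}\,e^{-x^2/2}.
\]
Substituting $x = \Phi^*(\mathbf{y})/\sqrt{2\norm{p_t}^2}$ and clearing the $\sqrt{2\norm{p_t}^2}$ factors turns $x^2+4$ into $(\Phi^*(\mathbf{y})^2 + 8\norm{p_t}^2)/(2\norm{p_t}^2)$ and $e^{-x^2/2}$ into $\exp(-\Phi^*(\mathbf{y})^2/4\norm{p_t}^2)$; after using $2/\sqrt{2\pi} = \sqrt{2/\pi}$ this reproduces exactly the $t$-th factor appearing inside the braces of \eqref{eqn:lowerbound}. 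I expect this substitution to be routine and to match the stated expression term for term; call the resulting lower bound on the $t$-th exceedance probability $q_t$.

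For step (ii) I would group the pairwise constraints by their first index $t$ and bound the CDF by a product over $t$. Since restricting an intersection to a sub-collection can only increase its probability, $\Pr\{M < \Phi^*(\mathbf{y})\}$ is at most $\Pr\big\{\bigcap_t\{p_t^\prime(w_t - w_{s(t)}) < \Phi^*(\mathbf{y})\}\big\}$ for any selection $s(t) \ne t$; each marginal factor is then at most $1 - q_t$ by step (i), giving the desired $1 - \prod_t(1 - q_t)$ once the factors are multiplied out and the complement is taken.

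The hard part, and the place where I would have to be most careful, is justifying the factorization into a product over all $T$ indices: the noise vectors $w_t$ are shared across the different pairs, so the events $\{p_t^\prime(w_t - w_{s(t)}) < \Phi^*(\mathbf{y})\}$ are not mutually independent, and no fixed-point-free selection $s(\cdot)$ can make all $T$ of them use disjoint noise vectors (a perfect matching would require $2T$ distinct vectors). A fully rigorous argument therefore needs either a negative-association property of these events (so that the CDF is dominated by the product of the marginals) or an explicit independent sub-collection (which would yield fewer than $T$ factors); absent that, the product form of \eqref{eqn:lowerbound} rests on treating the per-index events as independent. I would flag this independence step as the main gap to close, while the Gaussian-tail estimate of step (i) is the clean, self-contained ingredient that does the real work.
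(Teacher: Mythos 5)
Your steps (i) and (ii) are exactly the paper's: the per-pair statistic $p_t^\prime(w_t - w_s)$ is indeed $\mathcal{N}(0, 2\norm{p_t}^2)$, and your Komatsu substitution reproduces the factor inside the braces of \eqref{eqn:lowerbound} term for term. The gap you flag at the end --- that the events share noise vectors, so the product over all $T$ indices cannot be justified by independence --- is genuine, and it is the one substantive ingredient your proposal is missing. The paper closes it by exactly the first of the two routes you name. It fixes the cyclic selection $s(t) = t+1$ (indices taken cyclically), i.e.\ the sub-collection $\xi = \{p_1^\prime(w_1-w_2),\, p_2^\prime(w_2-w_3),\, \dots,\, p_T^\prime(w_T-w_1)\}$ of \eqref{eqn:defxi}, and proves in Lemma~\ref{lem:negativedep} that these $T$ jointly Gaussian variables are negatively dependent: consecutive members have covariance $\E\left\{\left(p_t^\prime(w_t-w_{t+1})\right)\left(p_{t+1}^\prime(w_{t+1}-w_{t+2})\right)\right\} = -p_t^\prime p_{t+1} < 0$ (strictly negative because the probes lie in $\mathbb{R}_+^m$), all other pairs are uncorrelated, and for jointly Gaussian variables nonpositive pairwise correlation implies negative association~\cite{GKV12,JP83}. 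Definition~\ref{def:negative:random} then gives $\Pr\left\{\cap_i\{\xi_i \le x\}\right\} \le \prod_i \Pr\{\xi_i \le x\}$, which combined with your inclusion $\{M \le x\} \subseteq \cap_i\{\xi_i \le x\}$ yields the $T$-fold product, and complementation gives \eqref{eqn:lowerbound}.

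Your own diagnosis of the alternatives was also correct, and worth making explicit: the second route you mention (an explicit independent sub-collection) cannot deliver the stated bound, since events built on disjoint noise vectors number at most $\lfloor T/2 \rfloor$, whereas the theorem's product runs over all $T$ indices. So negative dependence is not merely a convenient fix but the necessary tool here. In short, your proposal is structurally the paper's proof minus its Lemma~\ref{lem:negativedep}; supplying that lemma --- the cyclic choice, the covariance computation, and the Gaussian negative-correlation-implies-negative-dependence fact --- is all that is needed to complete your argument.
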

The key idea is to bound $M$ in~\eqref{eqn:def:M} by the highest order statistic of a carefully chosen set of random variables which are \emph{negatively dependent}. Refer to Appendix~\ref{appendix:negative:dependance} for the definition of negative dependence.

Figure~\ref{fig:M:LB} shows the comparison of the upper bound of the cdf (and correspondingly the lower bound on the false alarm probability) and the M.C. simulation of actual density of $M$. 
As can be seen from Fig~\ref{fig:M:LB} that the upper bound of the cdf (lower bound on false alarm probability) is tight at all regimes. 
The upper bound of the test statistic, $\overline{\Phi^*(\bf \obsresponse)}$, can be obtained by setting the analytical expression in~\eqref{eqn:lowerbound} to be equal to the desired false alarm probability. 

\ifcompress
\else
Appendix~\ref{subsec:upperbound:falsealarm} derives an upper bound on the false alarm probability. 
The upper bound on the false alarm probability is only asymptotically tight. 
However, in the usual region on interest (for e.g. a false alarm probability of $0.1$), the upper bound is tight. 
\fi
\subsection{Dynamic Revealed Preference in a noisy setting}
\label{subsec:recover:linear:perturbation}
In this section, we consider the case of dynamic utility maximizing agents, satisfying the model in~\eqref{eqn:linmodel}, in the presence of noise.  
In \Sec~\ref{subsubsec:detect:changepoint:noise}, we propose a procedure to detect the unknown change point time in presence of noise. 
In \Sec~\ref{subsubsec:recover:linear:perturbation}, similar to \Sec~\ref{subsec:classicalrevpref}, we formulate a hypothesis test to check whether the dataset satisfy the model in~\eqref{eqn:linmodel}. 
As in \Sec~\ref{subsec:classicalrevpref}, we bound the false alarm probability and obtain a criteria for recovering the linear perturbation coefficient, corresponding to minimum false alarm probability. 
Once the unknown change point time and the linear perturbation coefficients have been recovered, the base utility function can be recovered similar to that in \Sec~\ref{subsec:recover:alpha}. 
\subsubsection{Estimation of unknown change point}
\label{subsubsec:detect:changepoint:noise}
In the presence of noise, the inequalities in~\eqref{eqn:IE1} to~\eqref{eqn:IE3} may not be satisfied for any value of $\tau$. 
Hence, we consider the following linear programming problem, to find the minimum error or ``adjustment'' such that the inequalities in~\eqref{eqn:IE1} to~\eqref{eqn:IE3} are satisfied. 
\begin{alignat}{3}
	{\Phi_\tau}=   &   \text{ min }  \Phi \label{opt:min:phi:tau}\\
	 \text{s.t. }   & 
	 \scalebox{0.95}{$v_s - v_t - \lambda_t \probe_t^\prime (y_s - y_t) - \Phi $} & \le \scalebox{0.95}{$0\quad \phantom{\forall i,}(t < \tau)$} \nonumber \\
	 & \scalebox{0.95}{$v_s - v_t - \lambda_t \probe_t^\prime (y_s - y_t) + \alpha^\prime (y_s -y_t) - \Phi$} & \le \scalebox{0.95}{$0 \quad \phantom{\forall i,}(t \ge \tau)$} \nonumber \\
	 &\hspace{12em}\scalebox{0.95}{$\alpha_i - \lambda_t \probe_t^i $} &\le \scalebox{0.95}{$0  \quad (\forall i, t \ge \tau)$}  \nonumber \\
	 & \hspace{6em} \Phi \ge 0, \quad \lambda_t > 0 \span \nonumber
\end{alignat}
The solution of the linear program~\eqref{opt:min:phi:tau} depends on the choice of the change point variable $\tau$. 
When the data is measured without noise, the equations are satisfied with zero error at the correct change point. 
The estimated change point, $\hat{\tau}$, corresponds to time point with minimum adjustment. 
\begin{equation}
	\hat{\tau} = \underset{1\le \tau \le T}{\argmin\;} \Phi_\tau
	\label{eqn:opt:change:point}
\end{equation}
The intuition for~\eqref{eqn:opt:change:point} is if ${\tau}$ is the true change point, then the perturbation $\Phi$ needs to compensate only for the noise. 

\subsubsection{Recovering the linear perturbation coefficients for minimum false alarm probability}
\label{subsubsec:recover:linear:perturbation}
As in~\eqref{eqn:hypothesis}, define the null hypothesis $H_0$, that the dataset satisfies utility maximization under the model in~\eqref{eqn:linmodel}, and the alternative hypothesis $H_1$ that the dataset does not satisfy utility maximization under the model in~\eqref{eqn:linmodel}. 
Type-I errors and Type-II errors are defined, similarly, as in \Sec~\ref{subsec:classicalrevpref}. 

Consider the following hypothesis test: 
\begin{equation}
	\boxed{\phantom{\text{\hspace{0.2mm}}}\int\limits_{\Phi^*(\bf\obsresponse)}^{+\infty}f_M(\psi)\mathrm{d}\psi \overset{H_0}{\underset{H_1}{\gtrless}} \gamma}.
	\label{eqn:newstattest}
\end{equation}
In~\eqref{eqn:newstattest}: 
\begin{enumerate}
  \item $\gamma$ is the significance level of the test. 
  \item The test statistic ``$\Phi^*({\bf y})$'', is the solution of the following constrained optimization problem with $\tau = \hat{\tau}\text{ (from \Sec~\ref{subsubsec:detect:changepoint:noise})}$.
	\begin{alignat}{3}
		\text{ min }  & \Phi \label{opt:min:phi} \span \\
		 \text{s.t. }   & 
		 \scalebox{0.9}{$v_s - v_t - \lambda_t \probe_t^\prime (y_s - y_t) - \lambda_t\Phi $} & \le \scalebox{0.95}{$0\; \phantom{\forall i,}(t < \tau)$} \nonumber \\
		 \span \scalebox{0.95}{$v_s - v_t - \lambda_t \probe_t^\prime (y_s - y_t) + \alpha^\prime (y_s -y_t) - \lambda_t\Phi$} & \le \scalebox{0.95}{$0 \; \phantom{\forall i,}(t \ge \tau)$} \nonumber \\
		 &\hspace{12em}\scalebox{0.95}{$\alpha_i - \lambda_t \probe_t^i $} &\le \scalebox{0.95}{$0  \; (\forall i, t \ge \tau)$}  \nonumber \\
		 & \hspace{6em} \Phi \ge 0, \quad \lambda_t > 0 \span \nonumber
	\end{alignat}
	  The optimization problem above~\eqref{opt:min:phi} is similar to the optimization problem~\eqref{opt:min:phi:tau} but is introduced for simplicity of analysis of the error probability in the subsequent sections. 
      \item $f_M$ is the pdf of the random variable, $M$, where
	\begin{equation}
		M \triangleq M_1 + M_2, 
	\end{equation}
	where $M_1$ and $M_2$ are defined as:
	\begin{align}
		M_1 &\triangleq \underset{\underset{s \ne t}{s,t}}{\max} \left[p_t^\prime (w_t - w_s)\right], \\
		M_2 &\triangleq \underset{\underset{s \ne t, t \ge \tau}{s,t}}{\max} \left[\alpha^\prime {(w_t - w_s)}/\lambda_t\right], 
	\end{align}
\end{enumerate}
where $\alpha$ and $\lambda$ are the solution of~\eqref{opt:min:phi}.
The set of inequalities in~\eqref{opt:min:phi} can be re-written using~\eqref{eqn:noisemodel} as:
\begin{equation*}
\begin{aligned}
	&(v_s - v_t)/\lambda_t - {p}_t^\prime (x_s  - x_t ) \le  p_t^\prime {\left(w_t - w_s \right)}  & \quad (t < \tau), \\
	&(v_s - v_t)/\lambda_t - {p}_t^\prime (x_s  - x_t ) + \alpha^\prime  (x_s - x_t )/\lambda_t \nonumber \span \\ 
	\span \quad \le  p_t^\prime {\left(w_t - w_s \right)} - \alpha^\prime {\left(w_t - w_s \right)}/\lambda_t \; & \quad (t \ge \tau). 
\end{aligned}
\end{equation*}
Therefore for the dataset satisfying the model in~\eqref{eqn:linmodel} it should be the case that the test statistic ``$\Phi^*({\bf y}) \le M$''. 

The pdf of the random variable $M$ can be computed using Monte Carlo simulation. As in \Sec~\ref{subsec:classicalrevpref}, we bound the probability of false alarm and obtain a criteria to recover the linear perturbation coefficients corresponding to minimum false alarm probability. Theorem~\ref{thm:alphacriteria} provides the criteria and the proof is provided in Appendix~\ref{appendix:proof:alphacriteria}. 
\begin{theorem}
	\label{thm:alphacriteria}
	Assume that the noise components are iid zero mean unit variance Gaussian. Suppose $\tau={O}(\log{1/\varepsilon})$ and $T-\tau={O}(\log{1/\varepsilon})$, where $\varepsilon > 0$. Then the optimization criterion to recover $\alpha$ with minimum probability of Type-I error is to minimize $\|\alpha\|_2$ (i.e., the Euclidean norm).  
\end{theorem}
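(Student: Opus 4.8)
The plan is to express the Type-I error as the false-alarm probability $\Pr\{M \ge \Phi^*(\mathbf{y})\}$ and to show that, viewed as a function of the recovered perturbation, this probability is monotonically increasing in $\norm{\alpha}_2$; minimizing $\norm{\alpha}_2$ therefore minimizes the Type-I error. First I would isolate the $\alpha$-dependence. Rewriting the constraints of~\eqref{opt:min:phi} through the noise model~\eqref{eqn:noisemodel}, a dataset generated by~\eqref{eqn:linmodel} satisfies $\Phi^*(\mathbf{y}) \le M$, so the false-alarm probability equals $\Pr\{M \ge \Phi^*(\mathbf{y})\}$ with $M = M_1 + M_2$. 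Since $M_1 = \max_{s\ne t}[p_t^\prime(w_t - w_s)]$ does not involve $\alpha$, the entire dependence on the perturbation is carried by $M_2 = \max_{s\ne t,\,t\ge\tau}[\alpha^\prime(w_t-w_s)/\lambda_t]$.

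The crucial observation is that, because the noise $w_t$ is i.i.d.\ isotropic (standard) Gaussian, the joint law of the family $\{\alpha^\prime(w_t-w_s)\}_{s,t}$ is rotationally invariant in $\alpha$ and hence depends on $\alpha$ only through $\norm{\alpha}_2$. Concretely, writing $\alpha = \norm{\alpha}_2\, e$ for a fixed unit vector $e$ gives $M_2 \overset{d}{=} \norm{\alpha}_2 \cdot \max_{s\ne t,\,t\ge\tau}[(w_t^{(1)}-w_s^{(1)})/\lambda_t]$, a scalar increment process. Thus $M_2$ is homogeneous of degree one and stochastically increasing in $\norm{\alpha}_2$, so $M = M_1 + M_2$ is stochastically increasing in $\norm{\alpha}_2$ and consequently $\Pr\{M \ge \Phi^*\}$ is monotone increasing in $\norm{\alpha}_2$. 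This is the heart of the claim.

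To convert this stochastic-dominance heuristic into a rigorous statement that also accounts for the coupling of $\Phi^*$ and the $\lambda_t$ with $\alpha$, I would derive an explicit upper bound on the false-alarm probability in the spirit of Theorem~\ref{thm:lowerbound}: bound $M_1$ and $M_2$ each by the top order statistic of a negatively dependent Gaussian family, apply a Bonferroni split $\Pr\{M_1 + M_2 \ge \Phi^*\} \le \Pr\{M_1 \ge \Phi^*-c\} + \Pr\{M_2 \ge c\}$, and collect the resulting Gaussian tail terms into a product whose only $\alpha$-dependent factors arise from $M_2$ and carry variance proportional to $\norm{\alpha}_2^2/\lambda_t^2$. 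Each such factor is increasing in $\norm{\alpha}_2$, so the bound, and hence the minimizing criterion, is monotone in $\norm{\alpha}_2$. The hypotheses $\tau = O(\log 1/\varepsilon)$ and $T-\tau = O(\log 1/\varepsilon)$ enter precisely to control the number of order-statistic terms: there are $O(\tau)$ pre-change and $O(T-\tau)$ post-change terms, and since the maximum of $n$ Gaussians concentrates near $\sqrt{2\sigma^2\log n}$, bounding $\log n$ by $O(\log\log 1/\varepsilon)$ keeps the union/tail approximation accurate to order $\varepsilon$ and guarantees tightness in the regime of interest.

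I expect the main obstacle to be the sum-of-two-dependent-maxima structure. Because $M_1$ and $M_2$ are functions of the \emph{same} noise realization they are not independent, so controlling $\Pr\{M_1+M_2 \ge \Phi^*\}$ sharply rather than through a lossy union bound, while simultaneously handling the implicit dependence of $\Phi^*$ and the $\lambda_t$ on $\alpha$, is the delicate step; the negative-dependence machinery of Theorem~\ref{thm:lowerbound} must be extended to this coupled setting, and it is there that the logarithmic size hypotheses on $\tau$ and $T-\tau$ will be used to absorb the cross terms into an $O(\varepsilon)$ error.
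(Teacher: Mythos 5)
Your proposal arrives at the same criterion but by a genuinely different route, and the comparison is instructive. The paper does not use stochastic dominance, a Bonferroni split, or any product-of-Gaussian-tails bound here. Instead it truncates: it defines $\hat{M_1},\hat{M_2}$ as in~\eqref{eqn:m1:m2:hat}, shows via the negatively dependent subset $\xi$ of~\eqref{eqn:defxi} that $\Pr\{M_i\le 0\}\le 2^{-T}<\varepsilon$ (Lemma~\ref{lem:M1M2:alwayspositive}) and that $|\E\hat{M_i}-\E M_i|<2\varepsilon$ (Lemma~\ref{lem:M1M2closeexpectation}), and then applies Markov's inequality to the sum: $\Pr\{\hat{M_1}+\hat{M_2}\ge \Phi^*({\bf y})\}\le \bigl(\E\hat{M_1}+\E\hat{M_2}\bigr)/\Phi^*({\bf y})$. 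The $\alpha$-dependence is then isolated through $\E\hat{M_2}=\int_0^\infty \Pr(\hat{M_2}>z)\,dz\le \int_0^\infty \sum_{s,t}\exp\bigl(-z^2\lambda_t^2/4\norm{\alpha}^2\bigr)dz$, which is proportional to $\norm{\alpha}_2$, giving the criterion. Your scaling observation $M_2\overset{d}{=}\norm{\alpha}_2\cdot(\text{an }\alpha\text{-free maximum})$ by rotational invariance of isotropic Gaussian noise is correct (for fixed $\lambda_t$) and is arguably a more direct way to see why only $\norm{\alpha}_2$ matters; both your argument and the paper's share the same unresolved coupling of $\Phi^*$ and the $\lambda_t$ with $\alpha$, and both ultimately minimize an upper bound on the Type-I error rather than the error itself, so on that score you match the paper's level of rigor.

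Two concrete discrepancies are worth flagging. First, you misidentify the role of the hypotheses $\tau=O(\log 1/\varepsilon)$, $T-\tau=O(\log 1/\varepsilon)$: you read them as an upper cap on the number of order-statistic terms keeping the union bound tight, but in the paper they are a \emph{largeness} requirement (effectively $T\gtrsim \log_2(1/\varepsilon)$ and likewise for $T-\tau$) ensuring enough pre- and post-change observations that $\Pr\{M_i\le 0\}\le 2^{-T}<\varepsilon$. This positivity is what licenses the truncation and, crucially, Markov's inequality, which requires a nonnegative random variable --- the maxima of mean-zero Gaussian differences can be negative. In your Bonferroni route these hypotheses would be essentially superfluous, which is a sign you are not proving quite the same statement the paper's machinery is built for. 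Second, the ``main obstacle'' you anticipate --- the dependence between $M_1$ and $M_2$ --- is not actually delicate in either route: the paper dissolves it by linearity of expectation after Markov ($\E\{\hat{M_1}+\hat{M_2}\}$ splits with no independence assumption), and your split $\Pr\{M_1+M_2\ge\Phi^*\}\le \Pr\{M_1\ge\Phi^*-c\}+\Pr\{M_2\ge c\}$ likewise needs no independence; no extension of the negative-dependence machinery of Theorem~\ref{thm:lowerbound} is required (indeed, for these \emph{upper} tail bounds a plain union bound suffices; negative dependence is only needed for the lower bound on false alarm in Theorem~\ref{thm:lowerbound} and for the positivity Lemma~\ref{lem:M1M2:alwayspositive}).
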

The recovery of the linear perturbation coefficients and the base utility function are similar to that in \Sec~\ref{subsec:recover:alpha}.  
\section{Dimensionality reduction in Revealed Preference} 
\label{sec:dimensionreduction}
Classical revealed preference deals with the case $m < T$ (recall $m$ is the dimension of the probe vector and $T$ is the number of observations). 
Below, we consider the ``big data'' domain: $m \gg T$.  
Checking whether a dataset, $\dataset$, satisfies utility maximization~\eqref{eqn:utilitymaximization} can be done by verifying whether GARP (statement 4 of Theorem~\ref{thm:AfriatTheorem}) is satisfied. 
For $m \gg T$, the computational cost for checking GARP is dominated by the number of computations required to evaluate the inner product in GARP, given by $mT^2$. The computational cost for computing the inner product can be reduced by \emph{embedding} the $m$-dimensional probe and observation vector into a lower dimensional subspace, of dimension~$k$, and checking GARP on the lower dimensional subspace. 
We use Johnson-Lindenstrauss Lemma (JL Lemma) to achieve this. 
\begin{lemma}[Johnson-Lindenstrauss {(JL)}~\cite{JL84}]
	Suppose $x_1,x_2,\dots,x_n \in \reals^m$ are $n$ arbitrary vectors. For any $\varepsilon \in (0,\frac{1}{2})$, there exists a mapping $f:\reals^m\rightarrow\reals^k$, $k=O(\log n/\varepsilon^2)$, such that the following conditions are satisfied:
	\begin{alignat}{2}
		&(1-\varepsilon) {\norm{x_i}}^2 &\le {\norm{f(x_i)}}^2  \le  (1+\varepsilon) {{\norm{x_i}}}^2 \quad \forall i \label{eqn:normpreserveJL} \\
		&(1-\varepsilon) {\norm{x_i-x_{j}}}^2 &\le {\norm{f(x_i)-f(x_{j})}}^2  \nonumber \\
		&\quad &\le  (1+\varepsilon) {{\norm{x_i-x_{j}}}}^2  \quad \forall i,j. \label{eqn:distancepreserveJL}
	\end{alignat}
	\label{lem:JL}
\end{lemma}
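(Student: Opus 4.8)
The plan is to prove existence by the probabilistic method: I construct a \emph{random} linear map and show that with positive probability it satisfies all the required inequalities simultaneously. Concretely, let $R$ be a $k \times m$ matrix whose entries are i.i.d.\ standard Gaussian, and set $f(x) = \tfrac{1}{\sqrt{k}} R x$. Since $f$ is linear, $f(x_i) - f(x_j) = f(x_i - x_j)$, so both~\eqref{eqn:normpreserveJL} and~\eqref{eqn:distancepreserveJL} are instances of the single statement ``$\norm{f(v)}^2 \in [(1-\varepsilon)\norm{v}^2,\,(1+\varepsilon)\norm{v}^2]$'' applied to the vectors $v \in \{x_i\} \cup \{x_i - x_j\}$. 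The whole proof therefore reduces to (a) a concentration bound showing this holds for a \emph{fixed} $v$ with high probability, and (b) a union bound over the $O(n^2)$ vectors of interest.

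For the concentration step, fix $v \ne 0$ and set $u = v/\norm{v}$. Each coordinate of $Ru$ is $\sum_{j} R_{ij} u_j$, which is $N(0,1)$ because $\sum_j u_j^2 = 1$, and these $k$ coordinates are independent. Hence $\norm{f(v)}^2 / \norm{v}^2 = \tfrac{1}{k}\sum_{i=1}^k Z_i^2$ with $Z_i$ i.i.d.\ $N(0,1)$, i.e.\ a scaled chi-squared random variable $Q$ with $\E[Q] = 1$. The goal is then the tail estimate
\[
	\Pr\!\left[\, \abs{Q - 1} \ge \varepsilon \,\right] \le 2\exp\!\left(-c\,\varepsilon^2 k\right)
\]
for an absolute constant $c > 0$ (for $\varepsilon \in (0,\tfrac12)$ one can take $c = \tfrac18$). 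I would obtain this by a Chernoff argument: using the moment generating function $\E[e^{t Z^2}] = (1-2t)^{-1/2}$, bound $\Pr[Q \ge 1+\varepsilon] \le \big(e^{-t(1+\varepsilon)}(1-2t)^{-1/2}\big)^k$, optimize over $t$, and perform the symmetric computation for the lower tail.

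With the concentration bound in hand, the remainder is routine. Applying it to the $\binom{n}{2}$ difference vectors $x_i - x_j$ and the $n$ vectors $x_i$ gives at most $n^2$ ``bad'' events, each of probability at most $2\exp(-c\varepsilon^2 k)$. Choosing $k = \lceil C \log n / \varepsilon^2\rceil$ with $C = C(c)$ large enough makes the total failure probability strictly less than $1$ by the union bound, so some realization of $R$ avoids every bad event; that realization is the desired deterministic map $f$, and $k = O(\log n / \varepsilon^2)$ as claimed.

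The technical heart --- and the only step requiring real care --- is the chi-squared tail bound in the second paragraph. Getting the $\varepsilon^2 k$ scaling (rather than, say, $\varepsilon k$) hinges on keeping the $\varepsilon^2$ term of the Taylor expansion of $-t(1+\varepsilon) - \tfrac12\log(1-2t)$ at the optimal $t = \varepsilon/(2(1+\varepsilon))$ and controlling the higher-order $\varepsilon^3$ correction; the restriction $\varepsilon < \tfrac12$ is exactly what keeps that correction subdominant and fixes the absolute constant $c$. Everything else --- linearity, the reduction to a single unit vector, and the union bound --- is bookkeeping.
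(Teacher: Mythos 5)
Your proof is correct: the Gaussian random projection, the reduction by linearity to norm preservation for the $n + \binom{n}{2}$ vectors $\{x_i\} \cup \{x_i - x_j\}$, the chi-squared Chernoff bound with optimal $t = \varepsilon/(2(1+\varepsilon))$, and the union bound with $k = O(\log n/\varepsilon^2)$ together give a complete probabilistic-method proof (this is essentially the Dasgupta--Gupta argument). The paper, however, does not prove Lemma~\ref{lem:JL} at all --- it cites it to Johnson and Lindenstrauss~\cite{JL84} and instead states, as Theorem~\ref{thm:JLimp}, the Achlioptas variant~\cite{DA03} in which the projection matrix has i.i.d.\ equiprobable $\pm 1$ entries, since that is what the paper actually uses computationally (projections with no multiplications). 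It is worth noting that your Gaussian construction establishes the existence claim of the lemma but does not directly yield the $\pm 1$ version: for binary entries the quantity $\norm{f(v)}^2$ is no longer exactly a scaled chi-squared variable, and Achlioptas' proof must instead bound the moments of the binary projection by those of the Gaussian one, arriving at the same exponent $\varepsilon^2/2 - \varepsilon^3/3$ that appears explicitly in the paper's bound~\eqref{eqn:achlioptasJL}. So the two routes differ only in the concentration step --- exact MGF computation versus moment domination --- while the linearity reduction and union-bound skeleton you describe are common to both; your approach is the cleaner path to the lemma as stated, while the paper's citation of~\cite{DA03} buys the explicit constants and the hardware-friendly matrix that its YouTube experiments rely on.
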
\vspace{-1em}
To implement JL efficiently, one possible method of~\cite{DA03} is summarized in Theorem~\ref{thm:JLimp}. 
This method utilizes a linear map for $f$ and hence can be represented by a projection matrix $R$. 
The key idea in~\cite{DA03} is to construct the projection matrix $R$ with elements $+1$ or $-1$ so that the computing the projection involves no multiplications (only additions). 
\begin{theorem}[\cite{DA03}]
	\label{thm:JLimp}
	Let $ A = [x_1,x_2,\dots,x_n]^\prime$ denote the $n \times m$ data matrix. 
	Given $\varepsilon,\beta > 0$, let $R$ be a $m \times k$ random binary matrix, with independent and equiprobable elements $+1$ and $-1$, where 
	\begin{equation}
		k > \frac{4+2\beta}{\varepsilon^2/2 - \varepsilon^3/3} \log n. 
		\label{eqn:achlioptasJL}
	\end{equation}
	The projected data matrix, $B$ of dimension $n \times k$, is given by $B = \frac{1}{\sqrt{k}} A R$. 
	Then with probability at least $1-\delta$, where $\delta=\frac{1}{n^\beta}$, the inequalities~\cref{eqn:normpreserveJL,eqn:distancepreserveJL} holds, where $f:\reals^m\rightarrow\reals^k$ maps the $i$\textsuperscript{th} row of $A$ to the $i$\textsuperscript{th} row of $B$. \qed
\end{theorem}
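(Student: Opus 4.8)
The plan is to prove Achlioptas' sharp single-vector concentration bound by a moment-generating-function (MGF) argument, reduce both the norm-preservation~\eqref{eqn:normpreserveJL} and distance-preservation~\eqref{eqn:distancepreserveJL} statements to that one bound using linearity of the projection, and finish with a union bound calibrated to the threshold~\eqref{eqn:achlioptasJL}.

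For the reduction, note that $f$ is the linear map $f(u) = \frac{1}{\sqrt{k}} R^\prime u$ (the $i$-th row of $B = \frac{1}{\sqrt{k}} AR$ is $\frac{1}{\sqrt{k}} x_i^\prime R$), so $f(x_i) - f(x_j) = f(x_i - x_j)$ and~\eqref{eqn:distancepreserveJL} is just~\eqref{eqn:normpreserveJL} applied to the difference vectors. It therefore suffices to show, for an arbitrary fixed $u \in \reals^m$,
\[
	\Pr\left[\,\bigl|\norm{f(u)}^2 - \norm{u}^2\bigr| > \varepsilon \norm{u}^2\,\right] \le 2\exp\!\left(-\tfrac{k}{2}\bigl(\tfrac{\varepsilon^2}{2}-\tfrac{\varepsilon^3}{3}\bigr)\right),
\]
and then apply it to the $n$ vectors $x_i$ and the $\binom{n}{2}$ differences. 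Normalizing $\norm{u}=1$, I would write $Q_l = \sum_{j=1}^m R_{jl} u_j$ for the $l$-th coordinate of $\sqrt{k}\,f(u)$; the $Q_l$ are independent (distinct columns of $R$), each satisfies $\E[Q_l^2] = \sum_j u_j^2 = 1$, and $S := \sum_{l=1}^k Q_l^2 = k\norm{f(u)}^2$ has mean $k$.

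The crux is to show that each $Q_l^2$ has the MGF of a squared standard Gaussian, namely $\E[e^{hQ_l^2}] \le (1-2h)^{-1/2}$ for $0 \le h < \tfrac12$. I would prove this by Gaussian decoupling: using $e^{hq^2} = \E_Z[e^{\sqrt{2h}\,qZ}]$ with $Z\sim N(0,1)$ and then factorizing the inner Rademacher MGF, $\E_Q[e^{\sqrt{2h}ZQ}] = \prod_j \cosh(\sqrt{2h}\,Z u_j) \le \prod_j e^{hZ^2 u_j^2} = e^{hZ^2}$ by the elementary bound $\cosh(x) \le e^{x^2/2}$; integrating out $Z$ yields $(1-2h)^{-1/2}$. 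This is the only place the $\pm1$ structure enters, and it is what delivers the \emph{Gaussian-sharp} constant. By independence $\E[e^{hS}] \le (1-2h)^{-k/2}$, and a Chernoff bound with the optimal $h = \varepsilon/(2(1+\varepsilon))$ gives $\Pr[S\ge(1+\varepsilon)k]\le\bigl((1+\varepsilon)e^{-\varepsilon}\bigr)^{k/2}$; the inequality $\log(1+\varepsilon)\le\varepsilon-\tfrac{\varepsilon^2}{2}+\tfrac{\varepsilon^3}{3}$ converts this into $\exp(-\tfrac{k}{2}(\tfrac{\varepsilon^2}{2}-\tfrac{\varepsilon^3}{3}))$. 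The lower tail $\Pr[S\le(1-\varepsilon)k]$ follows from the companion bound $\E[e^{-hQ_l^2}]\le(1+2h)^{-1/2}$, valid in the range of $h$ produced by the optimal choice $h=\varepsilon/(2(1-\varepsilon))$, together with the analogous logarithmic estimate, giving the identical exponent.

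Finally, the union bound applies the single-vector estimate to the $n$ vectors $x_i$ and to the fewer than $n^2/2$ differences $x_i-x_j$; each fails with probability at most $2\exp(-\tfrac{k}{2}(\tfrac{\varepsilon^2}{2}-\tfrac{\varepsilon^3}{3}))$. The threshold~\eqref{eqn:achlioptasJL} is chosen precisely so that $\tfrac{k}{2}(\tfrac{\varepsilon^2}{2}-\tfrac{\varepsilon^3}{3}) > (2+\beta)\log n$, driving each per-vector failure probability below $n^{-(2+\beta)}$; summing over the $O(n^2)$ events leaves a total failure probability below $n^{-\beta}=\delta$, as claimed. I expect the MGF domination step to be the main obstacle, since it is the one genuinely non-routine ingredient: for Gaussian $R$ the variable $S$ is exactly chi-square, whereas here a quadratic form in bounded variables must be controlled, and the lower-tail companion bound in particular requires care about the admissible range of $h$. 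The reduction, the Chernoff optimization, and the union bound are otherwise standard bookkeeping.
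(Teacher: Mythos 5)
First, a point of reference: the paper does not prove Theorem~\ref{thm:JLimp} at all --- it is imported verbatim from Achlioptas~\cite{DA03} with a \qed\ --- so your proposal has to be measured against the original argument. Your reduction (linearity of $f$, so~\eqref{eqn:distancepreserveJL} is~\eqref{eqn:normpreserveJL} applied to differences) and your \emph{upper}-tail argument are correct, and the Gaussian-decoupling step $\E[e^{hQ^2}] = \E_Z\E_Q[e^{\sqrt{2h}ZQ}] \le \E_Z[e^{hZ^2}] = (1-2h)^{-1/2}$ via $\cosh(x)\le e^{x^2/2}$ is actually a slicker route than the one in~\cite{DA03}, where the Gaussian domination is obtained by term-by-term moment comparison ($\E[Q^{2d}]\le\E[g^{2d}]$, proved by multinomial expansion using that every even Rademacher moment is $1\le(2r-1)!!$). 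The Chernoff step with $h=\varepsilon/(2(1+\varepsilon))$ and the estimate $\log(1+\varepsilon)\le\varepsilon-\varepsilon^2/2+\varepsilon^3/3$ correctly reproduce the exponent $\tfrac{k}{2}(\varepsilon^2/2-\varepsilon^3/3)$ that calibrates~\eqref{eqn:achlioptasJL}.

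The genuine gap is the lower tail, and it is load-bearing. The companion bound $\E[e^{-hQ^2}]\le(1+2h)^{-1/2}$ is \emph{false} for general $h\ge 0$: take $u=(1/\sqrt{2},1/\sqrt{2},0,\dots,0)$, so that $Q^2\in\{0,2\}$ with equal probability; at $h=2$ one gets $\E[e^{-hQ^2}]=(1+e^{-4})/2\approx 0.509 > 5^{-1/2}\approx 0.447$. It does appear to hold in the range you need ($h=\varepsilon/(2(1-\varepsilon))<1/2$ for $\varepsilon<1/2$), but your own technique cannot establish it there: for a negative exponent the decoupling identity becomes $e^{-hq^2}=\E_Z[\cos(\sqrt{2h}\,qZ)]$, and the pointwise bound $\cos x\le e^{-x^2/2}$ fails (e.g.\ at $x=2\pi$), so the factorized estimate collapses. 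You flag that this step ``requires care about the admissible range of $h$,'' but flagging is not proving, and the obvious cheap repair does not rescue the theorem as stated: using $e^{-x}\le 1-x+x^2/2$ together with $\E[Q^4]=3-2\sum_j u_j^4\le 3$ gives only $\E[e^{-hQ^2}]\le e^{-h+3h^2/2}$, whose optimized Chernoff exponent is $k\varepsilon^2/6$ --- strictly weaker than $\tfrac{k}{2}(\varepsilon^2/2-\varepsilon^3/3)$ for all $\varepsilon<1/2$, hence insufficient to push the per-event failure probability below $n^{-(2+\beta)}$ at the threshold~\eqref{eqn:achlioptasJL}. So the restricted-range Gaussian domination must be proved by a dedicated argument (this is precisely what the corresponding lemma in~\cite{DA03} supplies), and your proposal is missing it. A minor secondary nit: your event count is $n+\binom{n}{2}$, and $2\bigl(n+\binom{n}{2}\bigr)>n^2$, so ``below $n^{-\beta}$'' needs the strict-inequality slack in~\eqref{eqn:achlioptasJL} (or the restriction to pairwise distances only, as in the original statement of~\cite{DA03}); this is absorbable, unlike the lower-tail issue.
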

The inequalities in~\cref{eqn:normpreserveJL,eqn:distancepreserveJL} hold in a probabilistic sense (with probability $1-\delta$), with the parameter $\beta$ controlling the corresponding probability.  

Checking the GARP conditions (statement 4 of Theorem~\ref{thm:AfriatTheorem}) depends only on the relative value of the inner product between the probe and response vectors. 
Hence, we can scale both the probe and response vector such that their norms are less that one. 
In this case, as a consequence of preservation of the norms of the vector, the Johnson-Lindenstrauss embedding also preserves the inner product. 
\begin{corollary}
	Let $x_i, x_j \in \reals^m$ and $\norm{x_i} \le 1, \norm{x_j} \le 1$ be such that~\eqref{eqn:distancepreserveJL} is satisfied with probability at least $1-\delta$. Then,
	\begin{equation}
	\Pr\left(\left(x_i^\prime x_j - f(x_i)^\prime f(x_j)\right) \ge \varepsilon \right) \le \delta. \nonumber
	\end{equation}
	\label{corr:JLinnerprod}
\end{corollary}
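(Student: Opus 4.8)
The plan is to reduce the inner-product statement to the norm/distance guarantees already provided by the embedding, by means of a polarization identity together with the linearity of $f$. First I would write the true inner product as
\[
x_i^\prime x_j = \tfrac{1}{4}\left(\norm{x_i + x_j}^2 - \norm{x_i - x_j}^2\right),
\]
and use that the map $f$ is realized by the random matrix $R$ of Theorem~\ref{thm:JLimp} and is therefore linear, so that $f(x_i)\pm f(x_j) = f(x_i \pm x_j)$ and hence
\[
f(x_i)^\prime f(x_j) = \tfrac{1}{4}\left(\norm{f(x_i + x_j)}^2 - \norm{f(x_i - x_j)}^2\right).
\]
Subtracting, the quantity $x_i^\prime x_j - f(x_i)^\prime f(x_j)$ becomes a signed combination of two norm-distortion terms, one attached to the vector $x_i + x_j$ and one to $x_i - x_j$.

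Next I would introduce the event $G$ on which the Johnson--Lindenstrauss norm bounds~\eqref{eqn:normpreserveJL} hold for both $x_i+x_j$ and $x_i-x_j$ (the latter being exactly the distance-preservation bound~\eqref{eqn:distancepreserveJL}); since a single draw of $R$ preserves the norms of the relevant finite set of vectors simultaneously, $G$ has probability at least $1-\delta$ and no extra union-bound loss is incurred. On $G$ I would apply the one-sided inequalities in the correct directions, namely the lower bound $(1-\varepsilon)\norm{x_i+x_j}^2 \le \norm{f(x_i+x_j)}^2$ for the positive term and the upper bound $\norm{f(x_i-x_j)}^2 \le (1+\varepsilon)\norm{x_i-x_j}^2$ for the negated term, to get
\[
x_i^\prime x_j - f(x_i)^\prime f(x_j) \le \tfrac{\varepsilon}{4}\left(\norm{x_i+x_j}^2 + \norm{x_i-x_j}^2\right).
\]
Finally, the parallelogram law gives $\norm{x_i+x_j}^2 + \norm{x_i-x_j}^2 = 2\norm{x_i}^2 + 2\norm{x_j}^2$, and the hypotheses $\norm{x_i}\le 1$, $\norm{x_j}\le 1$ bound the right-hand side by $4$, yielding $x_i^\prime x_j - f(x_i)^\prime f(x_j) \le \varepsilon$ deterministically on $G$. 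Hence $\Pr\left(x_i^\prime x_j - f(x_i)^\prime f(x_j) \ge \varepsilon\right) \le \Pr(G^c) \le \delta$.

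The algebra here is routine; the step that needs care is the probabilistic bookkeeping. I would make sure the embedding guarantee is invoked for the \emph{sum and difference} vectors $x_i\pm x_j$ rather than only for the original pair appearing in~\eqref{eqn:distancepreserveJL}, and that, because $R$ is a single fixed draw, the two required inequalities are enforced on one common event of probability $\ge 1-\delta$, so the final bound is $\delta$ and not $2\delta$. A secondary point to watch is orienting the one-sided Johnson--Lindenstrauss inequalities so that exactly the upper tail of $x_i^\prime x_j - f(x_i)^\prime f(x_j)$ is controlled, which is all the corollary asserts; the matching lower-tail estimate would follow by reversing the inequalities but is not required here.
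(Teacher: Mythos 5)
Your proof is correct and coincides with the standard argument the paper itself defers to in~\cite{VEM05}: the polarization identity together with linearity of the projection reduces the inner-product error to the norm distortions of $x_i+x_j$ and $x_i-x_j$, and the parallelogram law with $\norm{x_i}\le 1$, $\norm{x_j}\le 1$ yields the deterministic bound $\varepsilon$ on the single good event of probability at least $1-\delta$. Your closing remark is also the right bookkeeping point, since the corollary's hypothesis as literally stated only covers the difference $x_i-x_j$, so the guarantee must additionally be invoked for the sum vector (e.g.\ by including $x_i+x_j$, or equivalently $-x_j$, in the finite set to which the Johnson--Lindenstrauss construction is applied, which only changes the constant in $k=O(\log n/\varepsilon^2)$).
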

\ifcompress \vspace{-1em} The proof is available, for example, in~\cite{VEM05}.  \else \begin{proof} Proof included in appendix.\end{proof} \fi
The JL embedding of the vectors preserves the inner product to within a $\varepsilon$ fraction of the original value. 

Therefore, to check for utility maximization behaviour, we first project the high dimensional probe and response vector to a lower dimension using JL (using Theorem~\ref{thm:JLimp}). 
The inner products in the lower dimensional space is then used for checking the GARP condition for detecting utility maximization giving $\frac{m}{k}$ savings in computation. 
\section{Numerical Results}
\label{sec:results}
The aim of this section is three fold. 
First, we illustrate the change point detection algorithm in \Sec~\ref{sec:recnoise} and show how the revealed preference framework considered in this paper is fundamentally different from classical change detection algorithms. 
Second, we show that the theory developed in \Sec~\ref{sec:ucpd} and \Sec~\ref{sec:recnoise}, for utility change point detection, can successfully predict the change in ground truths through online search behaviour.  
Also, the recovered utility functions satisfy the single crossing condition indicating strategic substitution behaviour\footnote{The substitution behaviour in economics is the idea that consumers, constrained by a budget will substitute more expensive items with less costly alternatives. } in online search.  
Third, we show user behaviour in YouTube satisfies utility maximization. 
To reduce the computational cost associated with checking the utility maximization behaviour, we use dimensionality reduction techniques discussed in \Sec~\ref{sec:dimensionreduction}. 
In addition, in \Sec~\ref{sec:predYouTube}, we provide an algorithm to predict total traffic in YouTube. 
\subsection{Detection of unknown change point in the presence of noise}
\label{subsec:results:change:detection:noise}
In this section, we present simulation results on change point detection in the presence of noise. 
For the simulation study, assume that the probe and response vector is of dimension $2$, i.e.\ $m=2$. 
Assume that the system follows the model given by~\eqref{eqn:simulation:cobb:douglas}. 
The base utility function $v(x)$ is a Cobb-Douglas utility function with parameter $a_1$ and $a_2$. 
\begin{equation}
	\begin{aligned} 
		v(x) &= x_1^{a_1} x_2^{a_2} \\
		u(x) &= \begin{cases}
			v(x) & t < \tau \\
			v(x) + \alpha^\prime x & t \ge \tau
			\end{cases}
	\end{aligned}
	\label{eqn:simulation:cobb:douglas}
\end{equation}
Let the response be measured in noise as defined in~\eqref{eqn:noisemodel}. 

\Fig~\ref{fig:phi:tau} shows the simulation results for $\Phi_\tau$ in~\eqref{opt:min:phi:tau} as a function of $\tau$. The estimated change point ($\hat{\tau}$) is the point at which $\Phi_\tau$ attains minimum.  
\Fig~\ref{fig:roc:rp:cusum} compares the ROC plot for the revealed preference framework and the CUSUM algorithm for change detection. The details of the CUSUM algorithm for change detection are provided in Appendix~\ref{appendix:cusum:utility:change:point}. The CUSUM algorithm is used as a reference for comparing the performance of the revealed preference framework presented in this paper. 
The CUSUM algorithm in Appendix~\ref{appendix:cusum:utility:change:point} makes two critical assumptions: \begin{inparaenum}[(i)] \item Knowledge of the utility function before change \item Knowledge of the linear perturbation coefficients, and hence the utility function after the change\end{inparaenum}. 
The only unknown is the change point at which the utility changed. However, if the linear perturbation coefficients are \emph{also} unknown, then the CUSUM algorithm in Appendix~\ref{appendix:cusum:utility:change:point} can be modified to search over $\reals^m_+$ and select the parameter with the highest likelihood. The critical assumption is the knowledge of the utility function before the change point. One heuristic solution is to estimate the utility function using some initial data, assuming no change point, utilizing the Afriat's Theorem and then applying the CUSUM algorithm. Such a procedure is clearly suboptimal. In comparison, the revealed preference procedure in \Sec~\ref{subsec:recover:linear:perturbation} makes no assumption about the base utility function or the linear perturbation coefficients. As can be gleaned from \Fig~\ref{fig:roc:rp:cusum}, the performance of the revealed preference algorithm is comparable to the CUSUM algorithm, given the non-parametric assumptions. 
\begin{figure}[h]
	\centering
	\begin{subfigure}{0.45\textwidth}
		\includegraphics[width=\textwidth]{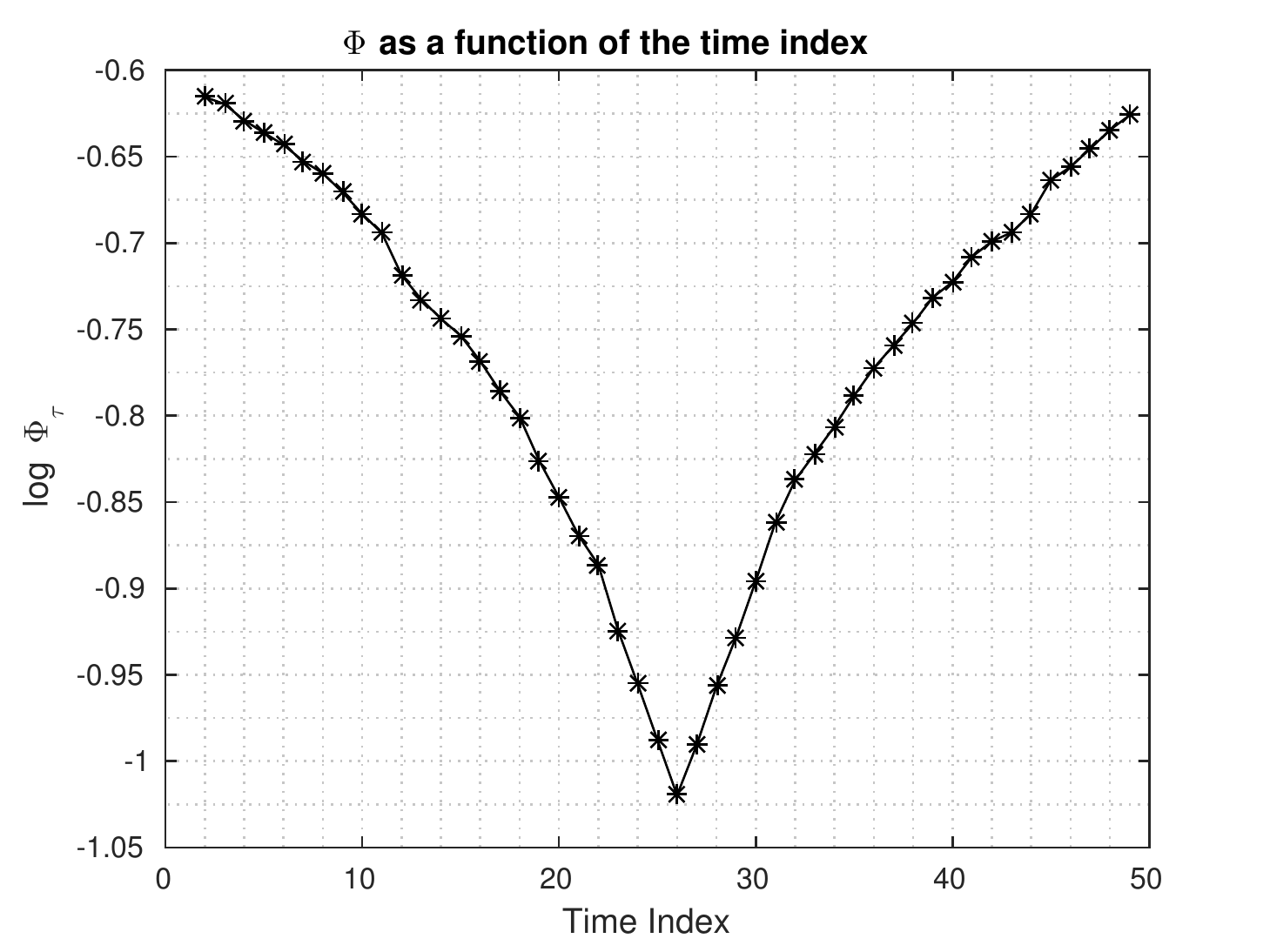}
		\caption{\small $\Phi_\tau$ (as defined in~\eqref{opt:min:phi:tau}) as a function of the change point $\tau$. }
		\label{fig:phi:tau}
	\end{subfigure}
    \begin{subfigure}{0.45\textwidth}
        \centering
  	\includegraphics[width=\textwidth,keepaspectratio]{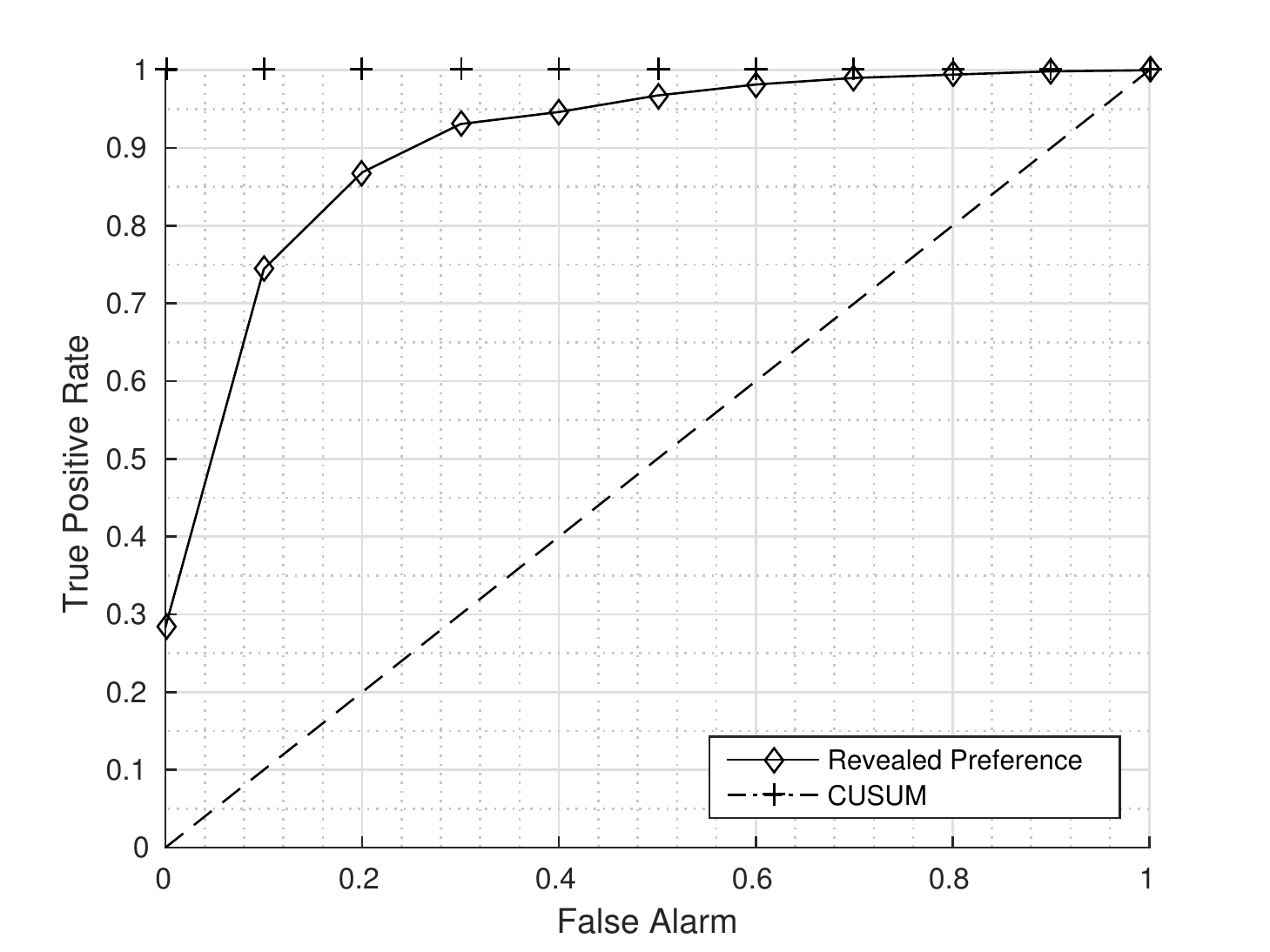}
	\caption{\small ROC plot. }
  	\label{fig:roc:rp:cusum}
    \end{subfigure}
    \caption{\small Estimation of the utility change point using the revealed preference framework (\Fig~\ref{fig:phi:tau}).  \Fig~\ref{fig:roc:rp:cusum} compares the ROC plots of the revealed preference framework with the CUSUM algorithm. The CUSUM algorithm in Appendix~\ref{appendix:cusum:utility:change:point} assumes knowledge of the utility function before and after the change point. However, the revealed preference framework considered in this paper assumes no parametric knowledge of the utility function. The plots were generated with $1000$ independent simulations. The parameters of Cobb-Douglas utility $v(x)$ equal to $(a_1, a_2)=(0.6,0.4)$ and $\alpha=(1,1)$ with the change point set as $26$. The budget is set to $5$. The noise variance is $0.50$.}
\label{fig:comparison:afriat:noise:cusum}
\end{figure}
\subsection{Yahoo! Buzz Game}
\label{subsec:yahooresults}
In this section, we present an example of a real dataset of online search process. 
The objective is to investigate the utility maximization of the online search process and to detect change points at which the utility has changed. 
The change points give useful information on when the ground truths have changed. 

The dataset that we use in our study is the Yahoo! Buzz Game Transactions from the Webscope datasets\footnote{Yahoo! Webscope dataset: A2 - Yahoo! Buzz Game Transactions with Buzz Scores, version 1.0 \url{http://research.yahoo.com/Academic_Relations}} available from Yahoo! Labs. 
In 2005, Yahoo!~along with O'Reilly Media started a fantasy market where the trending technologies at that point where pitted against each other. 
For example, in the browser market there were ``Internet Explorer'', ``Firefox'', ``Opera'', ``Mozilla'', ``Camino'', ``Konqueror'', and ``Safari''. 
The players in the game have access to the ``buzz'', which is the online search index, measured by the number of people searching on the Yahoo! search engine for the technology. 
The objective of the game is to use the buzz and trade stocks accordingly.  
The interested reader is referred to~\cite{MDFHKPD05} for an overview of the Buzz game. 
An empirical study of the dataset~\cite{CPK08} reveal that most of the traders in the Buzz game follow utility maximization behaviour. 
Hence, the dataset falls within the revealed preference framework, if we consider the buzz as the probe and the ``trading price\footnotemark'' as the response to the utility maximizing behaviour.  \footnotetext{The trading price is indicative of the value of the stock. } 

We consider a subset of the dataset containing only the ``WIRELESS'' market which contained two main competing technologies: ``WIFI'' and ``WIMAX''. 
Figure~\ref{fig:buzz} shows the buzz and the ``trading price'' of the technologies starting from April 1 to April 29. 
The buzz is published by Yahoo! at the start of each day and the ``trading price'' was computed as the average of the trading price of the stock for each day. 
\begin{figure}[h]
  \centering
  \includegraphics[width=0.5\textwidth,keepaspectratio]{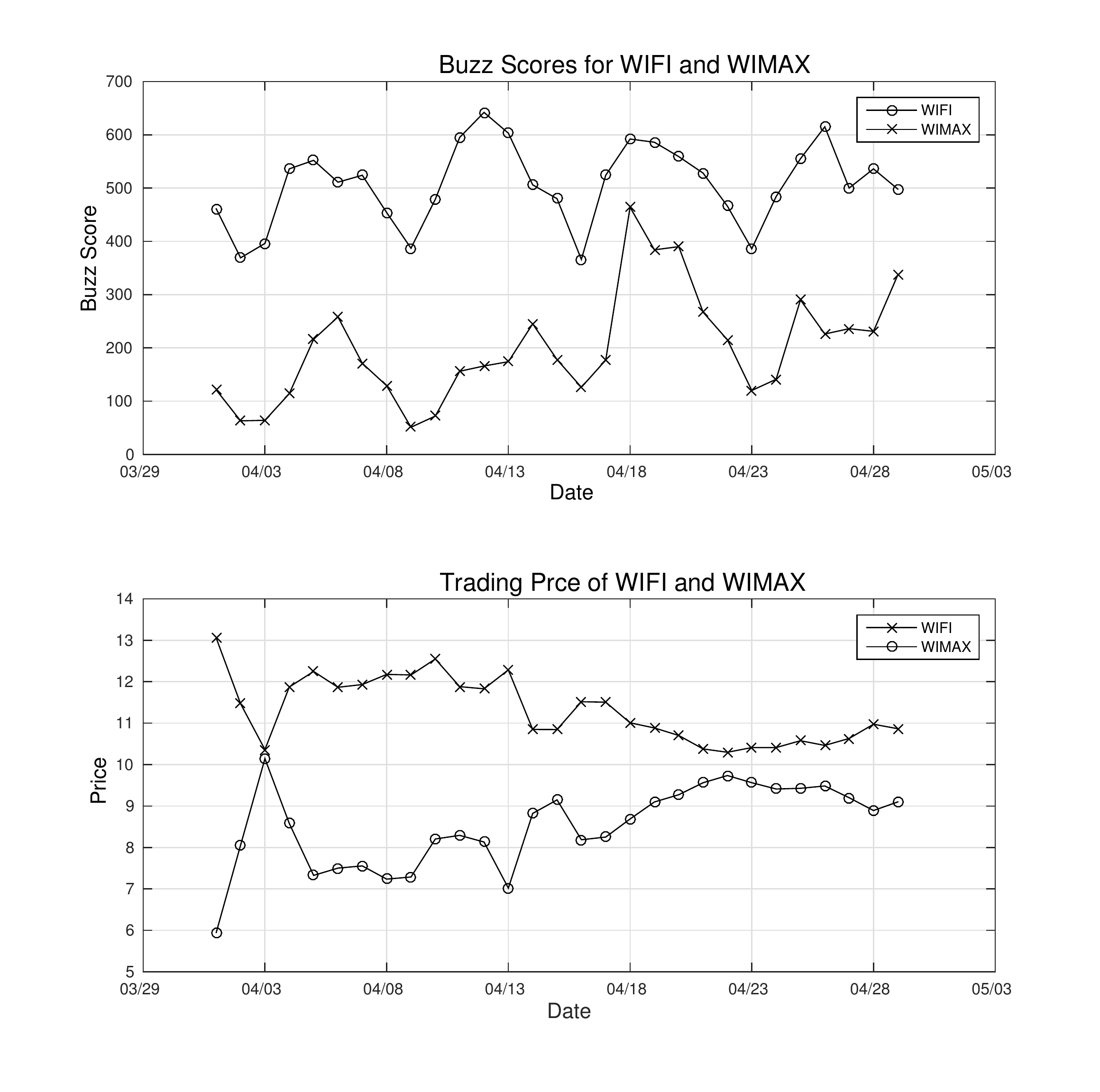}
  \caption{\small Buzz scores and trading price for WIFI and WIMAX in the WIRELESS market from April 1 to April 29. The change point was estimated as April 18. This corresponds to a new WIFI product announcement. The change can also be observed due to the sudden peak of interest in WIFI around April 18.}
  \label{fig:buzz}
\end{figure}

Chose the probe and response vector for this dataset as follows
\begin{equation*}
	\begin{aligned}
		\probe_\tindx &= \left[\text{Buzz(WIFI)}\quad \text{Buzz(WIMAX)}\right] \\
		\response_\tindx &= \left[\text{Trading price(WIFI)}\quad \text{Trading price(WIMAX)}\right].
	\end{aligned}
\end{equation*}
Checking the GARP condition or the Afriat inequalities~\eqref{eqn:AfriatFeasibilityTest}, we find that the dataset does not satisfy utility maximization for the entire duration from April 1 to April 29. 
However, we find that the dataset satisfies utility maximization from April 1 to April 17. 
Using the inequalities~\cref{eqn:IE1,eqn:IE2,eqn:IE3}, that we derived in Sec~\ref{sec:ucpd}, for the model in~\eqref{eqn:linmodel}, we see that utility has changed with change point, $\tau$, set to April 18. 
This correspond to a change in the ground truth which affected the utility of the agents. 
Indeed, we find that the change point corresponds to Intel's announcement of WIMAX chip\footnote{\url{http://www.dailywireless.org/2005/04/17/intel-shipping-wimax-silicon/}}. 

Also, by minimizing the 2-norm of the linear perturbation, which we derived in \Sec~\ref{sec:recnoise}, and using the optimization problem, postulated in \Sec~\ref{subsec:recover:alpha}, we find that the recovered linear coefficients which correspond to minimum perturbation is $\alpha = [0 \; 5.9]$. 
This is inline with what we expect, a positive change in the WIMAX utility, due to the change in ground truth. 
Furthermore, the recovered utility function, $v(x)$, is shown in \Fig~\ref{fig:recovered_utility} and the indifference curve (contour plot) of the base utility is shown in \Fig~\ref{fig:indifference_recovered_utility}. 
\begin{figure*}[t]
	\normalsize
    \centering
    \begin{subfigure}{0.5\textwidth}
        \centering
	\includegraphics[scale=0.45,valign=t]{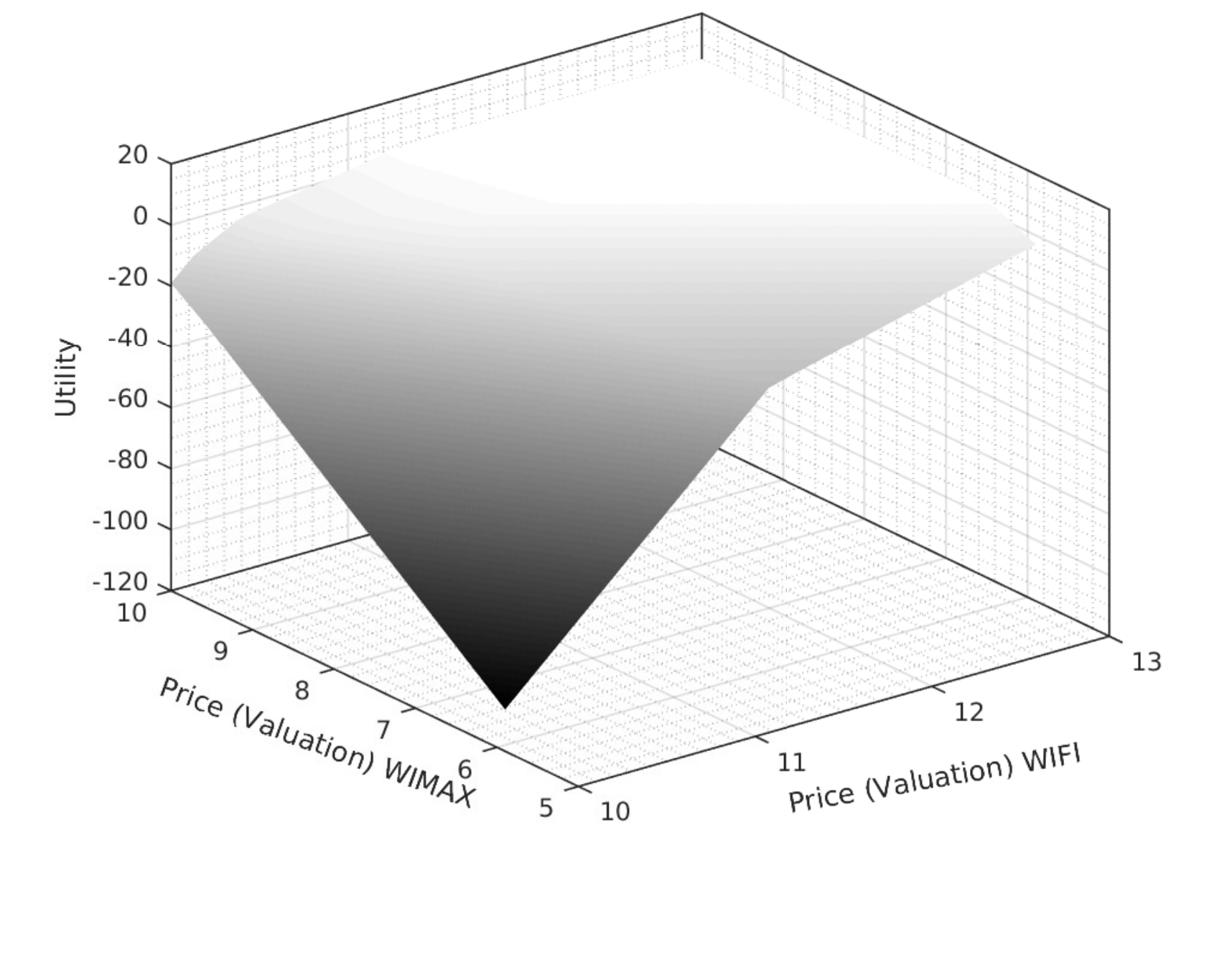}
	\caption{}
	\label{fig:recovered_utility}
    \end{subfigure}%
    \begin{subfigure}{0.5\textwidth}
        \centering
	\includegraphics[scale=0.5,valign=t]{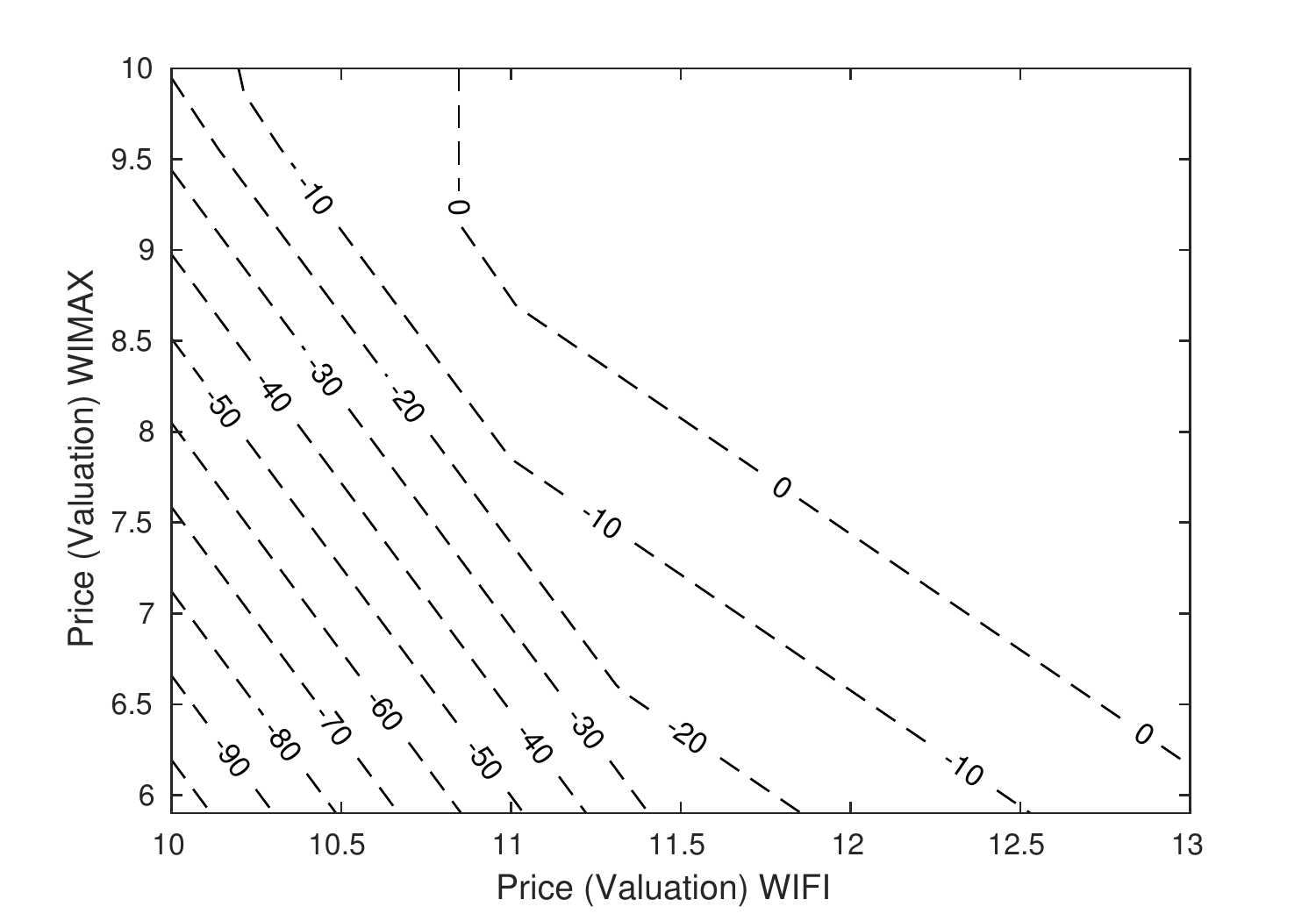}
        \caption{}
	\label{fig:indifference_recovered_utility}
    \end{subfigure}
    \caption{\small \Fig~\ref{fig:recovered_utility} shows the recovered utility function $v(x)$ using~\cref{eqn:recover:base}. Indifference curve of the recovered utility function, is shown in \Fig~\ref{fig:indifference_recovered_utility}.  The indifference curve suggests the substitution behaviour in online search. }
    \label{fig:baseutilityfn}
\end{figure*}
The recovered base utility function in \Fig~\ref{fig:recovered_utility} satisfy the single crossing condition\footnotemark indicating strategic substitute behaviour in online search. \footnotetext{Utility function, $U(x_1,x_2)$, satisfy the single crossing condition if $\forall \; x_1^\prime > x_1, \; x_2^\prime > x_2$, we have $U(x_1^\prime,x_2)-U(x_1,x_2) \ge 0 \implies U(x_1^\prime,x_2^\prime) - U(x_1,x_2^\prime) \ge 0$. The single crossing condition is an ordinal condition and therefore compatible with Afriat's test.}
The substitute behaviour in online search can also be noticed from the indifference curve in \Fig~\ref{fig:indifference_recovered_utility}. 
This is due to the fact that WIFI and WIMAX were competing technologies for the problem of providing wireless local area network. 
%
\subsection{Youstatanalyzer Database}
We now analyze the utility maximizing behaviour of users engaged in the popular online platform for videos, YouTube. 
YouTube is an example of a content-aware utility maximization, where the utility depends on the quality of the video present at any point. 
We measure the quality of the video using two measurable metrics: the number of subscribers and the number of views. 
The YouTube database that we use for our study is the Youstatanalyzer database\footnote{http://www.congas-project.eu/sites/www.congas-project.eu/files/Dataset/youstatanalyzer1000k.json.gz}. 
The Youstatanalyzer database is built using the Youstatanalyzer tool~\cite{ZMP13}, which collects statistics of YouTube videos using web scrapping technology. 
The database is particularly suited for our study of dimensionality reduction in revealed preference, since the entire database contains statistics of 1000K videos. 

From the database, we aggregated the statistics of all popular videos existing from start of 08 July, 2012 to end of 07 Sept, 2013, having at least $2$ subscribers. 
The entire duration was divided into 15 time periods, corresponding to each month of the duration, giving us a total of $15$ observations ($T = 15$). 
The entire duration need to be split into sub-time periods since the statistics of all the videos are not sampled each day. 
For the revealed preference analysis, each of the dimension of the probe and the response is associated with a unique video ID. 
The probe for the revealed preference analysis is the number of subscribers and the response is the number of views during the time period. 
The objective is to investigate utility maximizing behaviour between the number of people subscribed to a particular video and the number of views that the video received. 
Formally, the probe and the response is chosen as follows:
\begin{align*}
	\probe_\tindx 		&= \left[1/\text{\#Subscriber}(\text{Video}{1}),\dots,1/\text{\#Subscriber}(\text{Video}N)\right], \\ 
	\response_\tindx 	&= \left[\text{\#Views}(\text{Video}1),\dots,\text{\#Views}(\text{Video}N)\right]. 
\end{align*}
The motivation for this definition is that as the number of subscribers to a video increases, the number of views also increase~\cite{CFMZZL14}. 
The inner product of the probe and the response vector gives the sum of the ``view focus'' of all videos~\cite{BSW12}. 
Also, a recent study shows that $90\%$ of the YouTube views are due to $20\%$ of the videos uploaded~\cite{DDHLWR11}. 
Hence, if we restrict attention to popular videos, the view focus tend to remain constant during a time period which correspond to the linear constraint in the revealed preference setting. 

The number of videos satisfying the above requirements is $7605$, and therefore, the dimension of the probe and response, $m = 7605$. 
The number of inner product computations required for checking the GARP conditions is given by $m  T^2$.  
Hence, we apply Johnson-Lindenstrauss lemma to the data using the ``database friendly'' transform that we presented in Sec~\ref{sec:dimensionreduction}. 
We choose $\varepsilon = 0.1$, so that the inner product are within 90\% of the accuracy. 
Also, we choose $\beta=0.65$, such that the above condition on the inner products hold with probability at least $0.9$. 
Substituting the values of $T$, $\varepsilon$ and $\beta$ in~\eqref{eqn:achlioptasJL} we find that the dimension of the embedded subspace is $k=3800$. 
From the simulations, we see that the GARP is satisfied with probability $0.9$, which is inline with what we expect.  
For this example, the number of inner product computations required to compute GARP condition in the lower dimensional subspace is given by $k T^2$, which is less than the number of computations required to compute the GARP in the original space by a factor of $2$. 
Note from Lemma~\ref{lem:JL} and Theorem~\ref{thm:JLimp} that the dimension of the embedding is independent of the dimension of the probe and the response and hence higher computational savings can be obtained when the dimension of the probe and the response are higher. 

The utility function obtained in the lower dimension is useful for visualization, and gives a sparse representation of the original utility function. 
Below, we use the utility function obtained in the lower dimension to predict total traffic to YouTube when the number of subscribers to a particular video changes or when popular YouTube users (people with large number of subscribers) upload new videos. 
The above approach of predicting total number of views complements the findings in~\cite{CFMZZL14}, where the authors claim that individual views to YouTube video or channel cannot be predicted. 
This predicted total traffic serves as an useful benchmark for allocating server resources without compromising the user experience. 
\subsubsection*{\bf Predicting total traffic in YouTube}
\label{sec:predYouTube}
Based on the utility function recovered in the lower dimension, one can predict the total traffic in YouTube.  
As before, let $p_t \in \reals^m$ and $x_t \in \reals^m$ be the number of subscribers and the number of views at time $t$, respectively. 
Let $\tilde{p_t} \in \reals^k$ and $\tilde{x_t} \in \reals^k$ be the lower dimensional embedding of $p_t$ and $x_t$, respectively. 
The total traffic, or the number of views, at time $t$, is given by $T_t = \sum_{i=1}^{m}x_t^i$. 

We can estimate the total traffic using the lower dimension utility function. 
Given $p$, the number of subscribers at time $t+1$, let $\tilde{p}$ denote the lower dimensional embedding of $p$. 
The corresponding value of $\tilde{x}$ that rationalizes the data can be obtained by solving the following optimization problem: 
\begin{equation}
	\tilde{x} =  \underset{\{\tilde{p}^\prime x \le I\}}{\argmax} \underset{i \in \left\{1,2,\dots,t\right\}}{\operatorname{min}}\{u_i+\lambda_i \tilde{\probe_i}^\p(\response-\tilde{\response_i})\},
	\label{eqn:predictutility}
\end{equation}
The values of $\left\{u_i,\lambda_i\right\}$ are such that the GARP condition~\eqref{eqn:AfriatFeasibilityTest} is satisfied for the dataset $\left\{(\tilde{p_i},\tilde{x_i})_{i=1,2,\dots,t}\right\}$. 
The budget $I$, is assumed to be known or estimated independently of the above optimization problem. 
The estimated traffic due to the probe $\tilde{p}$ is given by $\tilde{T}=\sum_{i=1}^k \tilde{x}^i$. 

\section{Conclusion}
\label{sec:conclusion}
This paper derived a revealed preference based approach for change point detection in the utility function.  
The main result (Theorem~\ref{thm:AC:Linear}) provided necessary and sufficient conditions for the existence of the change point at which the utility function jump changes by a linear perturbation. 
In addition, in the presence of noise, we provided a procedure for detecting the unknown change point and a hypothesis test for testing the dataset for dynamic utility maximization. 
The results were applied on the Yahoo! Tech Buzz dataset and the estimated change point corresponds to the change in ground truth. 

%
The application of results developed in this paper provided novel insights into the utility maximizing behaviour of agents in online social media. 
Extension of the current work could involve analytically characterizing the GARP failure rate due to dimensionality reduction, or considering multiple change points, or considering higher order perturbation functions. 
\section*{Acknowledgment}
	The authors would like to thank William Hoiles\footnote{{ECE Department, University of British Columbia}} for valuable discussions and the datasets. 
\appendix
\setcounter{section}{1}
\subsection{Proof of Theorem~\ref{thm:AC:Linear}}
\label{appendix:proof:AC:linear}
Necessary Condition: Assume that the data has been generated by the model in~\eqref{eqn:linmodel}. 
An optimal interior point solution to the problem must satisfy:
\begin{align} 
	\bigtriangledown_{x_t^i} v(x_t) &= \lambda_t p_t^i  &(t < \tau) \label{eqn:firstorder1} \\
	\bigtriangledown_{x_t^i} v(x_t) + \alpha_i 	&= \lambda_t p_t^i  &(t \ge \tau) \label{eqn:firstorder2}
\end{align}
At time $t$, the concavity of the utility function implies:
\begin{equation}
	u(x_t,\alpha,t) + \bigtriangledown_{x_t}u(x_t,\alpha,t)^\prime (x_s -x_t) \ge  u(x_s,\alpha,t) \quad \forall s. 
	\label{eqn:concavity}
\end{equation}
Substituting the first order conditions~\eqref{eqn:firstorder1}, ~\eqref{eqn:firstorder2} into~\eqref{eqn:concavity}, yields
\begin{align}
	v(x_t) + \lambda_t p_t^\prime (x_s - x_t) 		&\ge v(x_s) 			\;  (t < \tau)   \label{eqn:AC1}\\
	v(x_t) + \lambda_t p_t^\prime (x_s - x_t) - \alpha^\prime (x_s -x_t) & \ge v(x_s) 	\;  (t \ge \tau) \label{eqn:AC2}
\end{align}
Denoting $v(x_t)=v_t$ yields the set of inequalities~\eqref{eqn:IE1},~\eqref{eqn:IE2}. 
\eqref{eqn:IE3} holds since the utility function $v(x)$ is monotonic increasing. 

Sufficient Condition: We first construct a piecewise linear utility function $\mathcal{V}(x)$ from the lower envelope of the $T$ overestimates, to approximate the function $v(x)$ defined in~\eqref{eqn:linmodel},
\begin{equation}
	\mathcal{V}(x) = \underset{t}{\text{min}} \{ v_t + \lambda_t \tilde{p}_t^\prime (x-x_t)\}, \label{eqn:V:approx}
\end{equation}
where each coordinate of $\tilde{p_t}$ is defined as,
\begin{equation}
  \tilde{p}_t^i =  \begin{cases}
    			p_t^i & t < \tau \\
			p_t^i - \alpha_i/\lambda_t & t \ge \tau
		  \end{cases}
  \label{eqn:ptilde}
\end{equation}
To verify that the construction in~\eqref{eqn:V:approx} is indeed correct, consider an arbitrary response, $\hat{x}$, such that: $p_t^\prime \hat{x} \le p_t^\prime {x}_t$\footnotemark. \footnotetext{In microeconomic theory, $x_t$ is said to be ``revealed preferred'' to $\hat{x}$. Since $x_t$ was chosen as response for the probe $p_t$, the utility at $x_t$ should be higher than the utility at $\hat{x}$. } 
We need to show $\mathcal{V}(\hat{x})+\alpha^\prime \hat{x} \le \mathcal{V}(x_t)+\alpha^\prime x_t$. 

First, we show that $\mathcal{V}(x_t) = v_t \forall t$ as follows. 
\begin{equation*}
  \mathcal{V}(x_t)  = v_m + \lambda_m \tilde{p}_m^\prime (x_t - x_m),
\end{equation*}
for some $m$. 
\ifcompress
If, $m \ge \tau$,
\begin{align*}
  \mathcal{V}(x_t) 	&= v_m + \lambda_m \tilde{p}_m^\prime (x_t - x_m) \\
  		 	&= v_m + \lambda_m p_m^\prime (x_t - x_m) - \alpha^\prime (x_t - x_m)\\
  		 	& \le v_t + \lambda_t p_t^\prime (x_t - x_t) \\
  			&= v_t
\end{align*}
If the inequality is true, then it would violate~\eqref{eqn:AC2}. 
Using similar technique, we obtain, if $m < \tau$, $\mathcal{V}(x_t)=v_t$. 
Hence, $\mathcal{V}(x_t) = v_t$. 
\else
If $m < \tau$,
\begin{align*}
  \mathcal{V}(x_t) 	&= v_m + \lambda_m \tilde{p}_m^\prime (x_t - x_m) \\
  		 	&= v_m + \lambda_m p_m^\prime (x_t - x_m) \\
  		 	& \le v_t + \lambda_t p_t^\prime (x_t - x_t) \\
  			&= v_t
\end{align*}
If the inequality is true, then it would violate~\eqref{eqn:AC1}. 
Similarly, for $m \ge \tau$,
\begin{align*}
  \mathcal{V}(x_t) 	&= v_m + \lambda_m \tilde{p}_m^\prime (x_t - x_m) \\
  		 	&= v_m + \lambda_m p_m^\prime (x_t - x_m) - \alpha^\prime (x_t - x_m)\\
  		 	& \le v_t + \lambda_t p_t^\prime (x_t - x_t) \\
  			&= v_t
\end{align*}
If the inequality is true, then it would violate~\eqref{eqn:AC2}. 
Hence, $\mathcal{V}(x_t) = v_t$. 
\fi

\ifcompress
Next, we show $\mathcal{V}(\hat{x})+\alpha^\prime \hat{x} \le \mathcal{V}(x_t)+\alpha^\prime x_t$. 
If, $t \ge \tau$,
\begin{align*}
  \mathcal{V}(\hat{x}) + \alpha^\prime \hat{x} 	&\le  v_t + \lambda_t \tilde{p}_t^\prime (\hat{x}-x_t) + \alpha^\prime \hat{x}\\
  						&= v_t + \lambda_t {p}_t^\prime (\hat{x}-x_t)  -\alpha^\prime (\hat{x} - x_t) + \alpha^\prime \hat{x} \\
  						&= v_t + \lambda_t {p}_t^\prime (\hat{x}-x_t)  + \alpha^\prime x_t \\
						&\le v_t + \alpha^\prime x_t \\
						&= \mathcal{V}(x_t) + \alpha^\prime x_t 
\end{align*}
The inequality holds, similarly, for the case $t < \tau$. 
\else
Next, we show that the constructed utility function results in a higher utility for $x_t$ which is revealed preferred to $\hat{x}$. 
For $t < \tau$, 
\begin{align*}
  \mathcal{V}(\hat{x})  	&\le  v_t + \lambda_t \tilde{p}_t^\prime (\hat{x}-x_t) \\
  				&= v_t + \lambda_t {p}_t^\prime (\hat{x}-x_t)  \\
				&\le v_t \\
				&= \mathcal{V}(x_t) 
\end{align*}
For, $t \ge \tau$,
\begin{align*}
  \mathcal{V}(\hat{x}) + \alpha^\prime \hat{x} 	&\le  v_t + \lambda_t \tilde{p}_t^\prime (\hat{x}-x_t) + \alpha^\prime \hat{x}\\
  						&= v_t + \lambda_t {p}_t^\prime (\hat{x}-x_t)  -\alpha^\prime (\hat{x} - x_t) + \alpha^\prime \hat{x} \\
  						&= v_t + \lambda_t {p}_t^\prime (\hat{x}-x_t)  + \alpha^\prime x_t \\
						&\le v_t + \alpha^\prime x_t \\
						&= \mathcal{V}(x_t) + \alpha^\prime x_t 
\end{align*}
\fi
Therefore, we can construct a utility function that rationalizes the data based on the model in~\eqref{eqn:linmodel}. \qed
\subsection{Negative Dependence of Random Variables}
\label{appendix:negative:dependance}
\begin{definition}[\cite{GKV12}]
  	\label{def:negative:random}
	Random variables $X_1,\dots,X_n,n\ge2,$ are said to be \emph{negatively dependent}, if for any numbers $x_1,\dots,x_n$, we have $$\Pr\left\{\cap_{k=1}^n\left\{X_k \le x_k\right\}\right\} \le \prod_{k=1}^n \Pr\left\{X_k \le x_k\right\},$$ and $$\Pr\left\{\cap_{k=1}^n\left\{X_k > x_k\right\}\right\} \le \prod_{k=1}^n \Pr\left\{X_k > x_k\right\}.\qed$$ 
\end{definition}
The interesting characteristic of negative dependence is that it allows us to bound the joint distribution of the random variables with their marginals. 

The variable $M$ in~\eqref{eqn:def:M} is the highest order statistic of the set of random variables $\mathcal{M}$ defined as:
\begin{equation}
  \mathcal{M}\triangleq \left\{\left(\probe_\tindx^\prime (\noise_\tindx - \noise_\sindx)\right) :\sindx,\tindx = \left\{1,2,\dots,\Tindxter \right\}, \sindx \ne \tindx \right\}. 
  \label{eqn:defM}
\end{equation}
Define, $\xi \subset \mathcal{M}$ as 
\begin{equation}
	\xi = \left\{\probe_1^\prime(\noise_1-\noise_2), \probe_2^\prime(\noise_2-\noise_3)\dots,\probe_T^\prime(\noise_T-\noise_1)\right\}. 
	\label{eqn:defxi}
\end{equation}
%
%

%
%
%
%
%
\begin{lemma}
	If the noise components are i.i.d zero mean unit variance Gaussian distribution, then the set of random variables in $\xi$ are negatively dependent. 
	\label{lem:negativedep}
\end{lemma}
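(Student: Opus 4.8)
The plan is to observe that $\xi$ is a family of $T$ centered, jointly Gaussian random variables, to show that every pair among them is \emph{negatively correlated}, and then to upgrade pairwise negative correlation to the two orthant inequalities of Definition~\ref{def:negative:random} via a Gaussian comparison (Slepian-type) argument. The nonnegativity of the probe vectors $p_t \in \mathbb{R}_+^m$ is precisely what forces the correlations to have the right sign, so this hypothesis is essential and I would flag exactly where it enters.

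First I would write $Z_t = p_t^\prime(w_t - w_{t+1})$ for $t = 1,\dots,T$ with the cyclic convention $w_{T+1} := w_1$, so that $\xi = \{Z_1,\dots,Z_T\}$. Since each $Z_t$ is a fixed linear functional of the independent Gaussian vectors $w_1,\dots,w_T$, the vector $(Z_1,\dots,Z_T)$ is zero-mean multivariate Gaussian and its law is determined by its covariance matrix alone. Using $\E[w_s w_t^\prime] = \delta_{st} I_m$, the next step is the covariance computation: two distinct indices $Z_s, Z_t$ share a common noise vector only when $\{s,s+1\}$ and $\{t,t+1\}$ overlap cyclically, i.e.\ when they are neighbours on the cycle, and the shared vector enters the two functionals with opposite signs. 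I therefore expect $\operatorname{Cov}(Z_s, Z_t) = -\,p_s^\prime p_{s+1} \le 0$ for adjacent pairs and $\operatorname{Cov}(Z_s, Z_t) = 0$ otherwise; the inequality holds exactly because $p_s, p_{s+1} \in \mathbb{R}_+^m$ gives $p_s^\prime p_{s+1} \ge 0$. Hence every off-diagonal covariance is nonpositive.

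The final step converts pairwise negative correlation into negative dependence. Let $G = (G_1,\dots,G_T)$ be a vector of \emph{independent} Gaussians with matched marginal variances $\operatorname{Var}(G_t) = \operatorname{Var}(Z_t)$. Since $\operatorname{Cov}(Z_s,Z_t) \le 0 = \operatorname{Cov}(G_s,G_t)$ for all $s \ne t$ while the variances agree, Slepian's inequality gives $\Pr\{\cap_t (Z_t \le x_t)\} \le \Pr\{\cap_t (G_t \le x_t)\} = \prod_t \Pr\{Z_t \le x_t\}$, which is the lower-orthant inequality; the analogous comparison (equivalently, applying the same argument to $-Z$, whose off-diagonal covariances are again nonpositive, via $\{Z_t > x_t\} = \{-Z_t \le -x_t\}$) yields $\Pr\{\cap_t (Z_t > x_t)\} \le \prod_t \Pr\{Z_t > x_t\}$. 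These are exactly the two conditions of Definition~\ref{def:negative:random}, so $\xi$ is negatively dependent.

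The main obstacle is not the covariance bookkeeping but getting the comparison step right: one must invoke Slepian's inequality in the correct direction (with matched variances, more-negative off-diagonal covariances \emph{lower} the joint orthant probabilities) and handle the lower and upper orthants separately rather than assuming one implies the other. As a fallback I would instead cite the classical result of Joag-Dev and Proschan that a jointly Gaussian vector with pairwise nonpositive covariances is negatively associated, since negative association immediately implies the orthant-dependence inequalities of Definition~\ref{def:negative:random}.
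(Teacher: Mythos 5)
Your proposal is correct and follows essentially the same route as the paper: the identical covariance computation (adjacent cyclic pairs give $-p_t^\prime p_{t+1} \le 0$, all other pairs are uncorrelated) followed by the reduction ``jointly Gaussian $+$ negatively correlated $\Rightarrow$ negatively dependent,'' which the paper simply cites from~\cite{GKV12,JP83} --- your fallback citation of Joag-Dev and Proschan is literally the paper's reference. Your Slepian-based justification of that step is a sound, self-contained substitute for the citation (and your use of $\le 0$ rather than the paper's $<0$ is actually the more careful statement, since nonnegative probes only guarantee $p_t^\prime p_{t+1}\ge 0$).
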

\begin{proof}
Each of the random variables in the set $\xi$, (defined in~\eqref{eqn:defxi}), is Gaussian and hence, to show negative dependence of random variables in $\xi$, it is sufficient to show that these variables are negatively correlated~\cite{GKV12,JP83}. 
Any element in $\xi$, $p_i^\prime(w_i-w_{i+1})$, is correlated with either:
\begin{compactenum}
	\item Element of the form $p_{i+1}^\prime(w_{i+1}-w_{i+2})$:
		\begin{equation*}
			\E\left\{\left(p_i^\prime(w_i-w_{i+1})\right)\left(p_{i+1}^\prime(w_{i+1}-w_{i+2})\right)\right\} 
			= -p_i^\prime p_{i+1} < 0. 
		\end{equation*}
	\item Element of the form $p_k^\prime(w_k-w_{k+1}),\; k\notin\{i,i+1\}$:
		\begin{equation*}
			\E\left\{\left(p_i^\prime(w_i-w_{i+1})\right)\left(p_{k}^\prime(w_{k}-w_{k+1})\right)\right\} 
			= 0. 
		\end{equation*}
\end{compactenum}
Hence the random variables in $\xi$~\eqref{eqn:defxi} are negatively correlated and hence, negatively dependent, as defined in Def.~\ref{def:negative:random}.
\end{proof}
\subsection{Proof of Theorem~\ref{thm:lowerbound}}
\label{appendix:lowebound}
For any subset of the random variables, $\xi$, $$\xi \subset \mathcal{M} = \left\{\left({p}_t^\prime (w_t-w_s)\right) :s,t = \left\{1,2,\dots,T\right\}, s\ne t\right\}$$
\begin{align*}
	&\Pr \left\{M \le x \right\} 	
	\le \Pr \left\{\underset{i}{\max\;} \xi_i  \le x\right\}  
	= \Pr \left\{\xi_1  \le x, \dots, \xi_T \le x\right\} \\ 
	\shortintertext{Choosing the set $\xi$ to be set defined in~\eqref{eqn:defxi}. Also, from Lemma~\ref{lem:negativedep} the random variables in $\xi$ are negatively dependent, as defined in Def.~\ref{def:negative:random}. Hence,}
	&\le \underset{i}{\prod} \Pr \left\{\xi_i \le x \right\} \\
	\shortintertext{Each of the term in $\xi$, $\left({p}_t^\prime (w_t-w_{t+1})\right)$ is distributed as $\mathcal{N}(0,2\norm{p_t}^2)$. 
	Using standard lower bound for the tail of the Gaussian distribution, we have}
	&\le \underset{t}{\prod} \left\{1 - \sqrt{\frac{2}{\pi}} \frac{\sqrt{2{\norm{p_t}}^2}}{x + \sqrt{x^2+8{\norm{p_t}}^2}}\exp(-x^2/4{\norm{p_t}}^2)\right\}  
\end{align*}
The false alarm probability is given by $1-\Pr \left\{M \le \Phi^*({\bf y}) \right\}$. 
Substituting the upper bound for $\Pr \left\{M \le  \Phi^*({\bf y})\right\}$, we get a lower bound for the false alarm probability.\qed 
\ifcompress
\else
\subsection{Upper bound on the false alarm probability}
\label{subsec:upperbound:falsealarm}
Define the maximum of the norm of the probe as follows:
\begin{equation}
	p_{\text{max}} = \underset{t=1,2,\dots,T}{\max } \norm{p_t}. 
	\label{eqn:pmax}
\end{equation}
Theorem~\ref{thm:upperbound} provides an upper bound on the false alarm probability. 
\begin{theorem}
	If the noise components have a standard Gaussian distribution, the probability of Type-I error is upper bounded by $$ T^2 \; \exp\left({-{\Phi^*({\bf y})}^2}/{4{\norm{p_{\text{max}}}}^2}\right),$$ where $p_{\text{max}}$ is the maximum of the probe vectors, as defined in~\eqref{eqn:pmax}. \qed
	\label{thm:upperbound}
\end{theorem}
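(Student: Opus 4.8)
The plan is to upper-bound the false alarm probability $\Pr\{M \ge \Phi^*(\mathbf{y})\}$ by a union bound (Boole's inequality), which is the natural counterpart to the negative-dependence argument used for the lower bound in Theorem~\ref{thm:lowerbound}. Since the statistic $M$ in~\eqref{eqn:def:M} is the maximum over the collection $\conc$ of the $T(T-1)$ random variables $\{\probe_\tindx^\prime(\noise_\tindx - \noise_\sindx): \sindx \ne \tindx\}$, the event $\{M \ge x\}$ is the union of the events $\{\probe_\tindx^\prime(\noise_\tindx - \noise_\sindx) \ge x\}$, so that $\Pr\{M \ge x\} \le \sum_{\sindx \ne \tindx}\Pr\{\probe_\tindx^\prime(\noise_\tindx-\noise_\sindx) \ge x\}$. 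Note that no dependence structure is exploited here, in contrast to the lower bound.

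First I would identify the marginal distribution of each summand. Because $\noise_\tindx$ and $\noise_\sindx$ are independent standard Gaussian vectors, $\probe_\tindx^\prime(\noise_\tindx-\noise_\sindx)$ is a zero-mean Gaussian with variance $\norm{\probe_\tindx}^2 + \norm{\probe_\tindx}^2 = 2\norm{\probe_\tindx}^2$. Next I would apply the standard Gaussian tail estimate $\Pr\{Z \ge x\} \le \tfrac12 \exp(-x^2/(2\sigma^2))$ for $Z \sim \mathcal{N}(0,\sigma^2)$, which with $\sigma^2 = 2\norm{\probe_\tindx}^2$ yields $\Pr\{\probe_\tindx^\prime(\noise_\tindx-\noise_\sindx) \ge x\} \le \tfrac12 \exp(-x^2/(4\norm{\probe_\tindx}^2))$.

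To remove the dependence on the individual probe norms, I would replace each $\norm{\probe_\tindx}$ by its maximum $p_{\text{max}}$ defined in~\eqref{eqn:pmax}; since the exponent $-x^2/(4\norm{\probe_\tindx}^2)$ is increasing in $\norm{\probe_\tindx}$, this substitution only weakens the estimate and gives the uniform per-pair bound $\tfrac12\exp(-x^2/(4 p_{\text{max}}^2))$. Summing over the $T(T-1)$ ordered pairs and using $T(T-1)/2 \le T^2$, I would obtain $\Pr\{M \ge x\} \le T^2 \exp(-x^2/(4 p_{\text{max}}^2))$; substituting $x = \Phi^*(\mathbf{y})$ completes the argument.

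There is no deep obstacle here, as the argument is elementary. The real work is bookkeeping: getting the variance factor of $2$ correct, choosing the tail-bound constant so the final exponent is exactly $-\Phi^*(\mathbf{y})^2/(4 p_{\text{max}}^2)$, and verifying that the prefactor collapses to $T^2$. The price of this simplicity is that the union bound discards both the strong negative correlations among the $\probe_\tindx^\prime(\noise_\tindx-\noise_\sindx)$ and the slack in the pair count, which is precisely why this bound is only asymptotically tight and loose in the moderate-deviation regime.
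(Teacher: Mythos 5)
Your proposal is correct, and it reaches the stated bound by a genuinely different route from the paper. The paper runs a Chernoff argument on the maximum itself: it applies Markov's inequality to $\exp(\alpha M)$, swaps the max and the exponential, bounds $\E\{\max_{s\ne t}\exp(\alpha\, p_t^\prime(w_t-w_s))\}$ by the sum of the Gaussian moment generating functions $\exp(\alpha^2\norm{p_t}^2)$, collects the prefactor $T^2$, and only then optimizes the single free parameter $\alpha = \Phi^*({\bf y})/(2\norm{p_{\text{max}}}^2)$ to get the exponent $-{\Phi^*({\bf y})}^2/4\norm{p_{\text{max}}}^2$. You instead apply Boole's inequality directly at the level of tail events and invoke the refined per-variable Gaussian tail estimate $\Pr\{Z\ge x\}\le \tfrac12\exp(-x^2/(2\sigma^2))$ for $Z\sim\mathcal{N}(0,2\norm{p_t}^2)$, $x\ge 0$ (valid here since the constraint set in~\eqref{eqn:AE} forces $\Phi^*({\bf y})\ge 0$). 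The two are morally equivalent --- the paper's sum-of-MGFs step is exactly the exponential-moment version of your union bound --- but your version optimizes each pair separately (the factor $\tfrac12 e^{-x^2/(2\sigma^2)}$ is the per-variable optimized bound), so before relaxing you actually hold the slightly sharper prefactor $T(T-1)/2$ and per-pair exponents with the individual norms $\norm{p_t}$, which you then legitimately weaken (the exponent $-x^2/(4\norm{p_t}^2)$ is indeed increasing in $\norm{p_t}$) to match the theorem's uniform $T^2\exp(-{\Phi^*({\bf y})}^2/4\norm{p_{\text{max}}}^2)$. What the paper's route buys in exchange is genericity: the MGF computation carries over unchanged to any sub-Gaussian or other noise with a tractable moment generating function, without needing a sharp marginal tail inequality. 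Your closing diagnosis is also accurate: both arguments discard the negative dependence exploited in Theorem~\ref{thm:lowerbound}, which is why this upper bound is loose except asymptotically.
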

\begin{proof}
\begin{align*}
	&\Pr \{ M \ge x\} 	= \Pr \{ \exp\left(\alpha M\right) \ge \exp\left(\alpha x\right)\} \\
	\shortintertext{By Markov inequality,}
	&\le \exp\left(-\alpha x\right) \E \left\{\exp\left(\alpha M\right)\right\}   \\
	&= \exp\left(-\alpha x\right) \E \left\{\exp\left(\alpha \underset{\underset{s\ne t}{s,t}}{\max} \left({p}_t^\prime (w_t-w_s) \right) \right)\right\}  \\
	&= \exp\left(-\alpha x\right) \E \left\{ \underset{\underset{s\ne t}{s,t}}{\max} \exp\left(\alpha \left({p}_t^\prime (w_t-w_s) \right) \right)\right\}  \\
	&\le \exp\left(-\alpha x\right) \underset{\underset{s\ne t}{s,t}}{\sum} \E \left\{  \exp\left(\alpha \left({p}_t^\prime (w_t-w_s) \right) \right)\right\} \\
	&=(T-1) \exp\left(-\alpha x\right) \underset{t}{\sum} \exp\left(\alpha^2 {\norm{p_t}}^2\right) \\
	&\le T^2 \exp\left(-\alpha x\right)  \exp\left(\alpha^2 {\norm{p_{\text{max}}}}^2\right), \\
	\shortintertext{where $p_{\text{max}}$ is the probe vector with the maximum norm, as defined in~\eqref{eqn:pmax}. The upper bound is valid for all value of $\alpha$. Picking $\alpha = x/(2{\norm{p_{\text{max}}}}^2)$, we get the lowest upper bound given by}
	&\le T^2 \; \exp\left({-x^2}/{4{\norm{p_{\text{max}}}}^2}\right). 
\end{align*}
\end{proof}
\fi
\subsection{Proof of Theorem~\ref{thm:alphacriteria}}
\label{appendix:proof:alphacriteria}
The proof of Theorem~\ref{thm:alphacriteria} relies on two lemmas: Lemma~\ref{lem:M1M2:alwayspositive} and Lemma~\ref{lem:M1M2closeexpectation} which are stated below. 
Lemma~\ref{lem:M1M2:alwayspositive} states that for ``sufficient'' number of observations the random variables are ``almost'' positive. 
\begin{lemma}
	\label{lem:M1M2:alwayspositive}
	Assume that the noise components are i.i.d zero mean unit variance Gaussian random variables. For $\varepsilon > 0$, $T = {O}(\log{1/\varepsilon})$ and $T - \tau = {O}(\log{1/\varepsilon})$, we have:
	\begin{align*}
		\Pr \left\{M_1 \le 0\right\} &< \varepsilon, \\
		\Pr \left\{M_2 \le 0\right\} &< \varepsilon \qed
	\end{align*}
\end{lemma}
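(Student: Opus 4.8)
The plan is to prove both tail bounds with a single template. Although $M_1$ and $M_2$ are maxima over \emph{all} admissible index pairs, the event $\{M_i \le 0\}$ forces \emph{every} constituent term to be non-positive, so it suffices to restrict attention to a conveniently chosen \emph{cyclic} subset of the terms. Each term in such a cycle is a non-degenerate zero-mean Gaussian, hence is non-positive with probability exactly $\tfrac12$, and --- crucially --- the cycle terms are negatively dependent. The negative-dependence inequality (Definition~\ref{def:negative:random}) then factorizes the joint probability into a product of marginals, each equal to $\tfrac12$, yielding a bound that decays geometrically in the number of cycle terms.

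Concretely, for $M_1$ I would take the cycle $\xi$ already introduced in~\eqref{eqn:defxi}, whose negative dependence is established in Lemma~\ref{lem:negativedep}. Since $\{M_1 \le 0\} \subseteq \bigcap_{i}\{\xi_i \le 0\}$, and each $\xi_i$ (of the form $p_i^\prime(w_i - w_{i+1})$, indices taken cyclically) is distributed as $\mathcal{N}(0,2\norm{p_i}^2)$ so that $\Pr\{\xi_i \le 0\} = \tfrac12$, negative dependence gives
\[
\Pr\{M_1 \le 0\} \;\le\; \prod_{i=1}^{T}\Pr\{\xi_i \le 0\} \;=\; 2^{-T}.
\]
This is below $\varepsilon$ once $T \ge \log_2(1/\varepsilon)$, which matches the assumed order of growth $T = O(\log 1/\varepsilon)$.

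For $M_2$ the same idea applies, but over the reduced index range $t \ge \tau$ that defines $M_2$. I would construct the analogous cycle $\zeta = \{\,\alpha^\prime(w_i - w_{i+1})/\lambda_i : i = \tau,\dots,T-1\,\}\cup\{\alpha^\prime(w_T - w_\tau)/\lambda_T\}$, a set of $T-\tau+1$ terms each of which is a valid summand of $M_2$ (each has $t=i\ge\tau$ and $s\ne t$). A short covariance computation --- identical in spirit to Lemma~\ref{lem:negativedep} --- shows that consecutive terms share a single noise vector with opposite sign, giving covariance $-\norm{\alpha}^2/(\lambda_i\lambda_{i+1}) < 0$, while non-adjacent terms are uncorrelated; since the terms are jointly Gaussian, negative correlation implies negative dependence. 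Each $\zeta_i$ is then a non-degenerate zero-mean Gaussian with $\Pr\{\zeta_i \le 0\} = \tfrac12$, so $\Pr\{M_2 \le 0\} \le 2^{-(T-\tau+1)}$, which is below $\varepsilon$ whenever $T-\tau = O(\log 1/\varepsilon)$.

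The main obstacle I anticipate is not the arithmetic but the careful justification of the negative-dependence step for $M_2$: one must exhibit a cycle whose every edge respects the constraint $t \ge \tau$, verify the sign of each pairwise covariance (it is the positivity of the $\lambda_t$ that preserves the negative sign after the $1/\lambda_t$ scaling), and confirm non-degeneracy so that each marginal is exactly $\tfrac12$ (requiring $\alpha \ne 0$ and $p_t \ne 0$). A secondary point worth flagging is that $\alpha$ and $\{\lambda_t\}$ are themselves outputs of the optimization~\eqref{opt:min:phi} and hence data-dependent; the cleanest route is to carry out the bound conditionally on these quantities, after which the stated inequalities follow.
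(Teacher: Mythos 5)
Your proposal is correct and follows essentially the same route as the paper's proof: restrict to the cyclic subset $\xi$ of~\eqref{eqn:defxi}, invoke the negative dependence established in Lemma~\ref{lem:negativedep} to factorize $\Pr\left\{\cap_i\{\xi_i \le 0\}\right\}$ into a product of marginals each equal to $\tfrac12$, and conclude $\Pr\{M_1 \le 0\} \le 2^{-T} < \varepsilon$. Your explicit cycle $\zeta$ for $M_2$, with adjacent covariance $-\norm{\alpha}^2/(\lambda_i\lambda_{i+1}) < 0$, simply carries out the step the paper dismisses as ``similar by an appropriate choice of a negative dependent set,'' so there is no substantive divergence (your remarks on non-degeneracy and conditioning on $\alpha,\{\lambda_t\}$ are sensible refinements the paper glosses over).
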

Define, auxiliary random variables, $\hat{M_1}$ and $\hat{M_2}$, which corresponds to the truncated distributions of $M_1$ and $M_2$ as shown below: 
\begin{equation}
	f_{\hat{M_i}}(x) = f_{M_i}(x) \mathds{1}\left\{x \ge 0\right\} + \Pr\left(M_i < 0 \right) \delta(x) \; \text{for } i = 1,2, 
	\label{eqn:m1:m2:hat}
\end{equation}
where, $\delta(x)$ is the delta function. 
Then, Lemma~\ref{lem:M1M2closeexpectation} states that the expectation of the auxiliary random variables $\hat{M_i};i=1,2$, are close to the expectation of the original random variables, $M_i;i=1,2$. 
\begin{lemma}
	\label{lem:M1M2closeexpectation}
	Assume that the noise components are i.i.d zero mean unit variance Gaussian random variables. For $\varepsilon > 0$, $T = {O}(\log{1/\varepsilon})$ and $T - \tau = {O}(\log{1/\varepsilon})$, we have:
	\begin{align*}
		|\E{\hat{M_1} - \E{M_1}}| &< 2\varepsilon,\\
		|\E{\hat{M_2} - \E{M_2}}| &< 2\varepsilon . \qed
	\end{align*}
\end{lemma}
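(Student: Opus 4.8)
The plan is to first pin down the auxiliary variables concretely: the density in~\eqref{eqn:m1:m2:hat} is exactly that of the positive part $\hat{M_i}=\max(M_i,0)$, since the term $\Pr(M_i<0)\,\delta(x)$ collects at the origin all the mass that $M_i$ spent on the negative axis. Hence $\E\hat{M_i}=\E[M_i\mathds{1}\{M_i\ge0\}]$, and subtracting $\E M_i=\E[M_i\mathds{1}\{M_i\ge0\}]+\E[M_i\mathds{1}\{M_i<0\}]$ gives
\[
\E\hat{M_i}-\E M_i = -\,\E[M_i\mathds{1}\{M_i<0\}]\ge 0,
\]
which is the expected \emph{negative part} of $M_i$. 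By the layer-cake identity this equals $\int_0^\infty\Pr(M_i<-u)\,du$. The quantity is automatically nonnegative, so the absolute value disappears and it remains to bound this left-tail integral by $2\varepsilon$.

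The core estimate is a left-tail bound on $M_i$, for which I would reuse the independence construction from the proof of Lemma~\ref{lem:M1M2:alwayspositive}. Among the terms defining $M_i$, select a sub-collection indexed by disjoint pairs of time indices: for $M_1$ the terms $p_{2j-1}^\prime(w_{2j-1}-w_{2j})$, and for $M_2$ the analogous terms with $t\ge\tau$. These involve disjoint noise vectors and are therefore mutually independent Gaussians. Since $M_i$ is a maximum, the event $\{M_i<-u\}$ forces every selected term below $-u$, so the probability factorizes:
\[
\Pr(M_i<-u)\le\prod_{j=1}^{N}\Phi\!\left(\frac{-u}{\sigma_j}\right)\le\Big[\Phi(-u/\sigma_{\max})\Big]^{N},
\]
where $N$ is the number of independent terms, $\sigma_j$ is the standard deviation of the $j$th term, and $\sigma_{\max}=\max_j\sigma_j$. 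For $M_1$ one has $N=\Theta(T)$ and $\sigma_j=\sqrt{2}\,\norm{p_{2j-1}}$; for $M_2$, $N=\Theta(T-\tau)$ and $\sigma_j=\sqrt{2}\,\norm{\alpha}/\lambda_{t_j}$, with $\norm{\alpha}$ and the $\lambda_t$ treated as fixed data-dependent constants.

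Applying the standard Gaussian tail bound $\Phi(-z)\le\tfrac12 e^{-z^2/2}$ to each factor yields $\Pr(M_i<-u)\le 2^{-N}\exp(-Nu^2/2\sigma_{\max}^2)$, and integrating the elementary Gaussian gives
\[
\int_0^\infty \Pr(M_i<-u)\,du \;\le\; 2^{-N}\,\sigma_{\max}\sqrt{\tfrac{\pi}{2N}}.
\]
The hypotheses $T=O(\log1/\varepsilon)$ and $T-\tau=O(\log1/\varepsilon)$ make $N$ grow linearly in $\log(1/\varepsilon)$, so $2^{-N}$ decays polynomially in $\varepsilon$ while $\sigma_{\max}\sqrt{\pi/2N}$ varies only algebraically; choosing the implied constant in the $O(\cdot)$ large enough forces the right-hand side below $2\varepsilon$ for both $i=1$ and $i=2$, which is the claim.

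The main obstacle is that it does not suffice merely to invoke Lemma~\ref{lem:M1M2:alwayspositive}, which controls only $\Pr(M_i<0)$: a Cauchy--Schwarz split $\E[(M_i)^-]\le\sqrt{\E M_i^2}\,\sqrt{\Pr(M_i<0)}$ would leave a factor $\sqrt{\E M_i^2}=O(\sqrt{\log T})$ and only a $\sqrt{\varepsilon}$ gain, which is too weak. Controlling the entire negative tail's \emph{contribution to the expectation}, rather than just its probability, is what forces the explicit left-tail integral and the Gaussian estimate above. The one bookkeeping subtlety is to prevent the data-dependent constants $\sigma_{\max}$, $\norm{\alpha}$, and $\lambda_t$ from interfering with the exponential decay in $N$; this is handled by absorbing them into the $O(\cdot)$ constant that governs the required size of $T$ and of $T-\tau$.
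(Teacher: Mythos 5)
Your proof is correct and follows the same skeleton as the paper's: reduce $\E\hat{M_i}-\E M_i$ to the expected negative part of $M_i$, write it as the left-tail integral $\int_0^\infty \Pr(M_i<-u)\,du$, bound that tail probability by a product of Gaussian cdfs over a well-chosen subset of the terms defining the max, and finish with the Gaussian tail bound and the assumption that $T$ and $T-\tau$ scale like $\log(1/\varepsilon)$ (both you and the paper read the $O(\cdot)$ hypotheses as ``at least a constant times $\log(1/\varepsilon)$,'' which is what the argument actually needs). The one genuine divergence is how the product bound is justified. The paper uses the cyclic subset $\xi=\{p_1^\prime(w_1-w_2),\dots,p_T^\prime(w_T-w_1)\}$ of \eqref{eqn:defxi}, whose adjacent elements share noise vectors, and invokes negative dependence (Lemma~\ref{lem:negativedep}) to factorize $\Pr\{\cap_i\{\xi_i\le x\}\}\le\prod_i\Pr\{\xi_i\le x\}$; you instead select disjoint index pairs $p_{2j-1}^\prime(w_{2j-1}-w_{2j})$, which are genuinely independent, so the factorization is immediate. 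Your route is more elementary --- it bypasses the negative-dependence machinery entirely --- at the cost of only $\lfloor T/2\rfloor$ factors instead of $T$, which is immaterial under the stated asymptotics. (Minor misattribution: you describe this as reusing ``the independence construction'' from Lemma~\ref{lem:M1M2:alwayspositive}, but that proof also uses the negatively dependent cyclic set, not an independent one; your construction stands on its own regardless.) Your bookkeeping is in fact cleaner than the paper's: since the atom at the origin in \eqref{eqn:m1:m2:hat} contributes zero to the mean, the exact identity $\E\hat{M_i}-\E M_i=-\E[M_i\mathds{1}\{M_i<0\}]$ holds, and your argument yields a bound of order $\varepsilon$, whereas the paper carries an extra additive $\Pr(M_i<0)<\varepsilon$ term in its expression for $\E\hat{M_i}$ (an artifact of treating the delta mass as contributing to the expectation) and therefore lands on $2\varepsilon$. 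Your closing remark about why a Cauchy--Schwarz split off Lemma~\ref{lem:M1M2:alwayspositive} alone would be too weak correctly identifies why the full left-tail integral must be estimated, which is precisely what the paper's integration-by-parts step accomplishes.
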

The proof of Lemma~\ref{lem:M1M2:alwayspositive} and Lemma~\ref{lem:M1M2closeexpectation} are provided in Appendix~\ref{appendix:proof:lem:M1M2:alwayspositive} and Appendix~\ref{appendix:proof:lem:M1M2closeexpectation}, respectively. 
\begin{proof}[Proof (Theorem~\ref{thm:alphacriteria})]
	For $ \Phi^*({\bf y})\ > 0$, the probability of Type-I error is given by $\Pr\left\{M \ge \Phi^*({\bf y})\right\}$. 
\begin{align*}
	&\Pr\left\{M \ge \Phi^*({\bf y})\right\} 	= \Pr\left\{M_1 + M_2 \ge \Phi^*({\bf y})\right\} \\
	\shortintertext{If $\tau = {O}(1/\varepsilon)$, by Lemma~\ref{lem:M1M2:alwayspositive} and Lemma~\ref{lem:M1M2closeexpectation}, the truncated distribution have a small probability of being less than $0$ and the expectation of the truncated distribution is close to the original distribution. Hence,}
	&= \Pr\left\{\hat{M_1} + \hat{M_2} \ge \Phi^*({\bf y})\right\} \\
	\shortintertext{By Markov inequality,}
	&\le  \frac{\E \left\{\hat{M_1}+\hat{M_2}\right\}}{\Phi^*({\bf y})} = \frac{\E \left\{\hat{M_1}\right\}}{\Phi^*({\bf y})} + \frac{\E \left\{\hat{M_2}\right\}}{\Phi^*({\bf y})}\\
	\shortintertext{Since, $\hat{M_2}$ is a positive random variable,}
	&= \frac{\E \left\{\hat{M_1}\right\}}{\Phi^*({\bf y})} +  \frac{\int\limits_0^\infty \Pr (\hat{M_2} > z) dz}{\Phi^*({\bf y})}\\
	&\le \frac{\E \left\{\hat{M_1}\right\}}{\Phi^*({\bf y})} + \frac{\int\limits_0^\infty \sum_{\underset{t \ge \tau, s\ne t}{s,t}}\Pr \left(\alpha\left(w_t - w_s\right)/\lambda_t > z\right) dz}{\Phi^*({\bf y})}\\
	&= \frac{\E \left\{\hat{M_1}\right\}}{\Phi^*({\bf y})} + \frac{\int\limits_0^\infty \sum_{\underset{t \ge \tau, s\ne t}{s,t}} \exp \left(-z^2\lambda_t^2/4{\norm{\alpha}}^2\right) dz}{\Phi^*({\bf y})}\\
\end{align*}
Hence, the probability of Type-I error, is minimized by minimizing $\norm{\alpha}^2$. 
\end{proof}
\subsection{Proof of Lemma~\ref{lem:M1M2:alwayspositive}}
\label{appendix:proof:lem:M1M2:alwayspositive}
\vspace{-2em}
\begin{align*}
	\Pr \left\{M_1 \le 0\right\} 	&= \Pr \left\{\underset{\underset{s\ne t}{s,t}}{\max} \left({p}_t (w_t-w_s)\right) \le 0\right\} \\
	\shortintertext{Choosing the set $\xi \subset \mathcal{M}$ as defined in~\eqref{eqn:defxi} and since the set $\xi$ are negative dependent from Lemma~\ref{lem:negativedep},}
	&\le \Pr \left\{\underset{i}{\max\;} \xi_i \le 0\right\} \le \underset{i}{\prod} \Pr  \left\{\xi_i \le 0\right\} 
	\shortintertext{Each of the term if $\xi_i = \left({p}_t (w_t-w_{t+1})\right)$ is distributed as $\mathcal{N}(0,2\norm{p_t}^2)$. Let $F_{\mathcal{N}(\mu,\sigma^2)}$ is the cdf of Gaussian random variable with mean $\mu$ and variance $\sigma^2$. Noting that $F_{\mathcal{N}(0,\sigma^2)}(0)=1/2$, we have the following}
	&=  \underset{t}{\prod} F_{\mathcal{N}(0,2\norm{p_t}^2)}(0) = \underset{t}{\prod} \frac{1}{2} = \frac{1}{2^{T}} < \varepsilon. 
\end{align*}
The proof for the second part is similar by an appropriate choice of a negative dependent set, $\xi$ and is hence omitted. \qed
\subsection{Proof of Lemma~\ref{lem:M1M2closeexpectation}}
\label{appendix:proof:lem:M1M2closeexpectation}
From the definition of the random variable $\hat{M_1}$ in~\eqref{eqn:m1:m2:hat}, 
\begin{align}
	\E\left\{\hat{M_1}\right\} &= \int\limits_{0}^{+\infty} x f_{{M_1}}(x) dx +  \Pr\left(M_1 < 0 \right) \nonumber\\
				   &<  \int\limits_{0}^{+\infty} x f_{{M_1}}(x) dx + \varepsilon, \label{eqn:M1hat:lb} 
\end{align}
where the inequality in~\eqref{eqn:M1hat:lb} follows from Lemma~\ref{lem:M1M2:alwayspositive}. The expectation of $M_1$ is given by
\begin{equation}
	\E\left\{{M_1}\right\} = \E\left\{{M_1}\mathds{1}\left\{x \ge 0\right\}\right\} + \E\left\{{M_1}\mathds{1}\left\{x \le 0\right\}\right\}  \label{eqn:exp:M1}
\end{equation}
To continue with the proof, we derive a lower bound on $\E\left\{{M_1}\mathds{1}\left\{x \le 0\right\}\right\}$, the second term in~\eqref{eqn:exp:M1}. 

The following upper bound follows trivially,
\begin{equation}
	\E\left\{{M_1}\mathds{1}\left\{x \le 0\right\}\right\} \le 0. 
\end{equation}
For computing the lower bound, we proceed by integration by parts
\begin{equation*}
	\E\left\{{M_1}\mathds{1}\left\{x \le 0\right\}\right\} = \int\limits_{-\infty}^{0} x f_{M_1}(x) dx  
	= - \int\limits_{-\infty}^{0} \Pr \left\{M_1 \le x\right\}dx.
\end{equation*}
Choosing the negative dependent subset $\xi \subset \mathcal{M}$ defined in~\eqref{eqn:defxi}, and noting that each $\xi_i$ is distributed as $\mathcal{N}(0,2\norm{p_i}^2)$ and using analytical expression for bounds of the cdf of the Gaussian density, we obtain   
\begin{equation}
	\ge  - \int\limits_{-\infty}^{0} \prod_i \Pr \left\{\xi_i \le x \right\}dx. 
	\ge -\varepsilon \label{eqn:M1:neg:lb} 
\end{equation}
From~\cref{eqn:M1hat:lb,eqn:M1:neg:lb} we get the first part of the Lemma~\ref{lem:M1M2closeexpectation}.The proof for the second part is similar and hence omitted.\qed
\subsection{CUSUM algorithm for Utility change point detection}
\label{appendix:cusum:utility:change:point}
\begin{minipage}{\columnwidth}
\begin{algorithm}[H]
\caption{CUSUM algorithm for Utility change point detection}
\label{algo:cusum:algorithm}
\begin{algorithmic}[1]
	\Initialize{Set threshold $\rho>0$. \\ Set cumulative sum $S(0) = 0$. \\ Set decision function $D(0) = 0$. }
	\For{$t = 1 \text{ to }T$}
	\\ For probe $p_t$ and observed response $y_t$, 
	\State  $x_t(0) =  \underset{\left\{p_t^\prime x \le I_t\right\}}{\argmax\;} v(x)$, with $v(x)$ as in~\eqref{eqn:simulation:cobb:douglas}. 
	\State  $x_t(1) =  \underset{\left\{p_t^\prime x \le I_t\right\}}{\argmax\;} v(x)+\alpha^\prime x$. 
	\State Likelihood $\ell(y_t,i) = \prod_{n=1}^m \Pr(y_t^n|x_t^n(i))\footnote{In our example, the probability is given by the Gaussian distribution. }\; ;i = 0,1$. 
	\State Instantaneous log likelihood $s(t) = \log(\frac{\ell(y_t,1)}{\ell(y_t,0)})$. 
	\State $S(t) = S(t-1) + s(t)$. 
	\State \scalebox{0.9}{$G(t) = \left\{G(t-1)+ s(t)\right\}^+$}, where \scalebox{0.9}{$\left\{x \right\}^+ = \max\left\{x,0\right\}$}. 
	\If{$G(t) > \rho$} 
	\State Change Point Estimate $\hat{\tau} = \underset{1\le \tau\le t}{\argmin \;} S(\tau-1)$
	\State break
	\EndIf
	\EndFor
\end{algorithmic}
\end{algorithm}
\end{minipage}

\bibliographystyle{\paperbibstyle}

\begin{thebibliography}{10}
\providecommand{\url}[1]{#1}
\csname url@samestyle\endcsname
\providecommand{\newblock}{\relax}
\providecommand{\bibinfo}[2]{#2}
\providecommand{\BIBentrySTDinterwordspacing}{\spaceskip=0pt\relax}
\providecommand{\BIBentryALTinterwordstretchfactor}{4}
\providecommand{\BIBentryALTinterwordspacing}{\spaceskip=\fontdimen2\font plus
\BIBentryALTinterwordstretchfactor\fontdimen3\font minus
  \fontdimen4\font\relax}
\providecommand{\BIBforeignlanguage}[2]{{%
\expandafter\ifx\csname l@#1\endcsname\relax
\typeout{** WARNING: IEEEtran.bst: No hyphenation pattern has been}%
\typeout{** loaded for the language `#1'. Using the pattern for}%
\typeout{** the default language instead.}%
\else
\language=\csname l@#1\endcsname
\fi
#2}}
\providecommand{\BIBdecl}{\relax}
\BIBdecl

\bibitem{RN96}
B.~Reeves and C.~Nass, \emph{How people treat computers, television, and new
  media like real people and places}.\hskip 1em plus 0.5em minus 0.4em\relax
  CSLI Publications and Cambridge university press, 1996.

\bibitem{TS13}
O.~Toubia and A.~T. Stephen, ``Intrinsic vs. image-related utility in social
  media: Why do people contribute content to twitter?'' \emph{Marketing
  Science}, vol.~32, no.~3, pp. 368--392, 2013.

\bibitem{Samuelson38}
P.~Samuelson, ``A note on the pure theory of consumer's behaviour,''
  \emph{Economica}, vol.~5, no.~17, pp. 61--71, 1938.

\bibitem{Afr67}
S.~Afriat, ``The construction of utility functions from expenditure data,''
  \emph{International economic review}, vol.~8, no.~1, pp. 67--77, 1967.

\bibitem{Var82}
H.~Varian, ``The nonparametric approach to demand analysis,''
  \emph{Econometrica}, vol.~50, no.~1, pp. 945--973, 1982.

\bibitem{Die73}
W.~Diewert, ``Afriat and revealed preference theory,'' \emph{The Review of
  Economic Studies}, vol.~40, no.~3, pp. 419--425, 1973.

\bibitem{Var06}
H.~Varian, ``Revealed preference,'' \emph{Samuelsonian economics and the
  twenty-first century}, pp. 99--115, 2006.

\bibitem{KH12}
V.~Krishnamurthy and W.~Hoiles, ``Afriat's test for detecting malicious
  agents,'' \emph{IEEE Signal Processing Letters}, vol.~19, no.~12, pp.
  801--804, 2012.

\bibitem{BP13}
M.~Barni and F.~P{\'e}rez-Gonz{\'a}lez, ``Coping with the enemy: Advances in
  adversary-aware signal processing,'' in \emph{IEEE Conf.\ on Acoustics,
  Speech and Signal Processing}, 2013, pp. 8682--8686.

\bibitem{HK15}
W.~Hoiles and V.~Krishnamurthy, ``Nonparametric demand forecasting and
  detection of energy aware consumers,'' \emph{IEEE Transactions on Smart
  Grid}, vol.~6, no.~2, pp. 695--704, March 2015.

\bibitem{CJP10}
S.~Currarini, M.~O. Jackson, and P.~Pin, ``Identifying the roles of race-based
  choice and chance in high school friendship network formation,'' \emph{Proc.
  of the National Academy of Sciences}, vol. 107, no.~11, pp. 4857--4861, 2010.

\bibitem{JPW07}
R.~K.-X. Jin, D.~C. Parkes, and P.~J. Wolfe, ``Analysis of bidding networks in
  e{B}ay: aggregate preference identification through community detection,'' in
  \emph{Proc. of AAAI workshop on PAIR}, 2007.

\bibitem{SEW04}
J.~Strebel, T.~Erdem, and J.~Swait, ``Consumer search in high technology
  markets: exploring the use of traditional information channels,''
  \emph{Journal of Consumer Psychology}, vol.~14, no.~1, pp. 96--104, 2004.

\bibitem{GMPBSMB09}
J.~Ginsberg, M.~H. Mohebbi, R.~S. Patel, L.~Brammer, M.~S. Smolinski, and
  L.~Brilliant, ``Detecting influenza epidemics using search engine query
  data,'' \emph{Nature}, vol. 457, no. 7232, pp. 1012--1014, 2009.

\bibitem{CM09}
H.~A. Carneiro and E.~Mylonakis, ``Google trends: A web-based tool for
  real-time surveillance of disease outbreaks,'' \emph{Clinical Infectious
  Diseases}, vol.~49, no.~10, pp. 1557--1564, 2009.

\bibitem{ZYF11}
X.~Zhou, J.~Ye, and Y.~Feng, ``Tuberculosis surveillance by analyzing google
  trends,'' \emph{IEEE Trans. on Biomedical Engineering}, vol.~58, no.~8, pp.
  2247--2254, Aug 2011.

\bibitem{SSGA10}
A.~Seifter, A.~Schwarzwalder, K.~Geis, and J.~Aucott, ``The utility of
  “google trends” for epidemiological research: Lyme disease as an
  example,'' \emph{Geospatial Health}, vol.~4, no.~2, pp. 135--137, 2010.

\bibitem{Doo10}
J.~A. Doornik, ``Improving the timeliness of data on influenza-like illnesses
  using google trends,'' Tech. Rep., 2010.

\bibitem{WB09}
L.~Wu and E.~Brynjolfsson, \emph{The Future of Prediction: How Google Searches
  Foreshadow Housing Prices and Sales}.\hskip 1em plus 0.5em minus 0.4em\relax
  University of Chicago Press, 2009, p. 147.

\bibitem{TSSW10}
A.~Tumasjan, T.~Sprenger, P.~Sandner, and I.~Welpe, ``Predicting elections with
  twitter: What 140 characters reveal about political sentiment,'' in
  \emph{Int.\ AAAI Conference on Web and Social Media}, 2010, pp. 178--185.

\bibitem{KJ02}
B.~K. Kaye and T.~J. Johnson, ``Online and in the know: Uses and gratifications
  of the web for political information,'' \emph{Journal of Broadcasting \&
  Electronic Media}, vol.~46, no.~1, pp. 54--71, 2002.

\bibitem{ABBC15}
\BIBentryALTinterwordspacing
A.~Adams, R.~Blundell, M.~Browning, and I.~Crawford, ``Prices versus
  preferences: taste change and revealed preference,'' Mar 2015. [Online].
  Available: \url{/uploads/publications/wps/WP201511.pdf}
\BIBentrySTDinterwordspacing

\bibitem{MF12}
D.~L. McFadden and M.~Fosgerau, ``A theory of the perturbed consumer with
  general budgets,'' National Bureau of Economic Research, Tech. Rep., 2012.

\bibitem{BrMa98}
D.~J. Brown and R.~L. Matzkin, ``Estimation of nonparametric functions in
  simultaneous equations models, with an application to consumer demand,''
  1998.

\bibitem{FIS15}
D.~Fudenberg, R.~Iijima, and T.~Strzalecki, ``Stochastic choice and revealed
  perturbed utility,'' \emph{Econometrica}, vol.~83, no.~6, pp. 2371--2409,
  2015.

\bibitem{CS10}
G.~C. Chasparis and J.~Shamma, ``Control of preferences in social networks,''
  in \emph{Decision and Control (CDC), IEEE Conf. on}, Dec 2010, pp.
  6651--6656.

\bibitem{BN93}
M.~Basseville and I.~Nikiforov, \emph{Detection of Abrupt Changes --- Theory
  and Applications}, ser. Information and System Sciences Series.\hskip 1em
  plus 0.5em minus 0.4em\relax New Jersey, USA: Prentice Hall, 1993.

\bibitem{BDMUV14}
M.-F. Balcan, A.~Daniely, R.~Mehta, R.~Urner, and V.~V. Vazirani, ``Learning
  economic parameters from revealed preferences,'' in \emph{Int. Conf.\ on Web
  and Internet Economics}.\hskip 1em plus 0.5em minus 0.4em\relax Springer,
  2014, pp. 338--353.

\bibitem{ZR12}
M.~Zadimoghaddam and A.~Roth, ``Efficiently learning from revealed
  preference,'' in \emph{Internet and Network Economics}.\hskip 1em plus 0.5em
  minus 0.4em\relax Springer, 2012, pp. 114--127.

\bibitem{BV06}
E.~Beigman and R.~Vohra, ``Learning from revealed preference,'' in \emph{Proc.\
  of the 7th ACM Conference on Electronic Commerce}, ser. EC '06, 2006, pp.
  36--42.

\bibitem{CSZ10}
O.~Chapelle, B.~Schlkopf, and A.~Zien, \emph{Semi-Supervised Learning},
  1st~ed.\hskip 1em plus 0.5em minus 0.4em\relax The MIT Press, 2010.

\bibitem{JL84}
W.~B. Johnson and J.~Lindenstrauss, ``Extensions of {L}ipschitz mappings into a
  {H}ilbert space,'' \emph{Contemporary mathematics}, vol.~26, pp. 189--206,
  1984.

\bibitem{DA03}
D.~Achlioptas, ``Database-friendly random projections: Johnson-lindenstrauss
  with binary coins,'' \emph{Journal of Computer and System Sciences}, vol.~66,
  pp. 671--687, 2003.

\bibitem{VEM05}
S.~S. Vempala, \emph{The random projection method}.\hskip 1em plus 0.5em minus
  0.4em\relax American Mathematical Soc., 2005, vol.~65.

\bibitem{MDFHKPD05}
B.~Mangold, M.~Dooley, G.~Flake, H.~Hoffman, T.~Kasturi, D.~Pennock, and
  R.~Dornfest, ``The {T}ech {B}uzz {G}ame,'' \emph{Computer}, vol.~38, no.~7,
  pp. 94--97, July 2005.

\bibitem{CPK08}
Y.~Chen, D.~M. Pennock, and T.~Kasturi, ``An empirical study of dynamic
  pari-mutuel markets: Evidence from the tech buzz game,'' in \emph{Proc. of
  WebKDD}, 2008.

\bibitem{ZMP13}
M.~Zeni, D.~Miorandi, and F.~D. Pellegrini, ``Youstatanalyzer: A tool for
  analysing the dynamics of {Y}ou{T}ube content popularity,'' in \emph{Proc. of
  the 7th Intn. Conf. on Performance Evaluation Methodologies and Tools}, 2013,
  pp. 286--289.

\bibitem{CFMZZL14}
X.~Cheng, M.~Fatourechi, X.~Ma, C.~Zhang, L.~Zhang, and J.~Liu, ``Insight data
  of {Y}ou{T}ube from a partner's view,'' in \emph{Proc. of NOSSDAV
  Workshop}.\hskip 1em plus 0.5em minus 0.4em\relax ACM, 2014, pp. 73--78.

\bibitem{BSW12}
A.~Brodersen, S.~Scellato, and M.~Wattenhofer, ``{Y}ou{T}ube around the world:
  Geographic popularity of videos,'' in \emph{Proc. of the 21st Int. Conf. on
  World Wide Web}, ser. WWW '12.\hskip 1em plus 0.5em minus 0.4em\relax ACM,
  2012, pp. 241--250.

\bibitem{DDHLWR11}
Y.~Ding, Y.~Du, Y.~Hu, Z.~Liu, L.~Wang, K.~Ross, and A.~Ghose, ``Broadcast
  yourself: Understanding {Y}ou{T}ube uploaders,'' in \emph{Proc. of SIGCOMM
  Conf. on Internet Measurement Conference}, 2011, pp. 361--370.

\bibitem{GKV12}
M.~Gerasimov, V.~Kruglov, and A.~Volodin, ``On negatively associated random
  variables,'' \emph{Lobachevskii Journal of Mathematics}, vol.~33, no.~1, pp.
  47--55, 2012.

\bibitem{JP83}
K.~Joag-Dev and F.~Proschan, ``Negative association of random variables with
  applications,'' \emph{The Annals of Statistics}, pp. 286--295, 1983.

\end{thebibliography}


\end{document}